\newcommand{\e}{\mathrm{e}}
\newcommand{\iu}{\mathrm{i}}
\newcommand{\df}{\mathrm{d}}
\newcommand{\lap}{\Delta}
\newcommand{\ds}{\displaystyle}
\newcommand{\leqnomode}{\tagsleft@true\let\veqno\@@leqno}
\newcommand{\jbk}[1]{\left\langle {#1} \right\rangle}
\newcommand{\rmop}[1]{\mathop{\mathrm{#1}}}
\newcommand{\beq}{\begin{equation}}
\newcommand{\eeq}{\end{equation}}
\def\ol{\overline}
\newcommand{\nv}{\nu}
\renewcommand{\Re}{\rmop{Re}}
\newcommand{\p}{\partial}
\newcommand{\Cbb}{\mathbb{C}}
\newcommand{\Kbb}{\mathbb{K}}
\newcommand{\Nbb}{\mathbb{N}}
\newcommand{\Rbb}{\mathbb{R}}
\newcommand{\Ga}{\alpha}
\newcommand{\Gb}{\beta}
\newcommand{\Ge}{\epsilon}
\newcommand{\Gg}{\gamma}
\newcommand{\Gl}{\lambda}
\newcommand{\Gv}{\nu}
\newcommand{\Gr}{\rho}
\newcommand{\Gs}{\sigma}
\newcommand{\Gj}{\tau}
\newcommand{\Gz}{\zeta}
\newcommand{\GD}{\Delta}
\newcommand{\GO}{\Omega}
\newtheorem{prop}{Proposition}[section]
\newtheorem{theo}[prop]{Theorem}
\newtheorem{coro}[prop]{Corollary}
\newtheorem{lemm}[prop]{Lemma}
\theoremstyle{definition}
\newtheorem*{note*}{Note}
\newtheorem*{claim*}{Claim}
\newtheorem*{exam*}{Example}
\newtheorem*{rema*}{Remark}
\newtheorem*{exer*}{Exercise}
\newtheorem*{prob*}{Problem}
\numberwithin{equation}{section}
\begin{document}

\title{Finiteness of the stress in presence of closely located inclusions with imperfect bonding\thanks{This work was supported by NRF of S. Korea grant no. 2022R1A2B5B01001445, KIAS Individual grant no. MG089001 at Korea Institute for Advanced Study, and NSF of China grant no. 11901523.}}

\author{Shota Fukushima\thanks{Department of Mathematics and Institute of Applied Mathematics, Inha University, Incheon 22212, S. Korea. Email: \texttt{shota.fukushima.math@gmail.com}.} \and Yong-Gwan Ji\thanks{School of Mathematics, Korea Institute for Advanced Study, Seoul 02455, S. Korea. Email: \texttt{ygji@kias.re.kr}.} \and Hyeonbae Kang\thanks{Department of Mathematics and Institute of Applied Mathematics, Inha University, Incheon 22212, S. Korea. Email: \texttt{hbkang@inha.ac.kr}.} \and Xiaofei Li\thanks{College of Science, Zhejiang University of Technology, Hangzhou, 310023, P. R. China. Email: \texttt{xiaofeili@zjut.edu.cn}.}}

\maketitle

\begin{abstract}
If two conducting or insulating inclusions are closely located, the gradient of the solution may become arbitrarily large as the distance between inclusions tends to zero, resulting in high concentration of stress in between two inclusions. This happens if the bonding of the inclusions and the matrix is perfect, meaning that the potential and flux are continuous across the interface. In this paper, we consider the case when the bonding is imperfect. We consider the case when there are two circular inclusions of the same radii with the imperfect bonding interfaces and  prove that the gradient of the solution is bounded regardless of the distance between inclusions if the bonding parameter is finite. This result is of particular importance since the imperfect bonding interface condition is an approximation of the membrane structure of biological inclusions such as biological cells.
\end{abstract}

\noindent{\footnotesize \textbf{MSC2020. }Primary 35J47; Secondary 35Q92}

\noindent{\footnotesize \textbf{Key words. }Imperfect bonding; membrane; inclusions; gradient estimate; discrete Schr\"odinger operator}

\tableofcontents

\section{Introduction}

The purpose of this paper is to quantitatively investigate the stress occurring when two inclusions are closely located in a composite. The stress is represented by the gradient of the solution to the problem of a relevant partial differential equation. As we will review shortly, it is well understood that if the inclusions and the background matrix are perfectly bonded, then the stress can be arbitrarily large as the distance between two inclusions tends to zero. The perfect bonding is characterized by the continuities of the potential and the flux along the interfaces. The question of this paper is what happens if the bonding of the inclusions and the matrix is imperfect, i.e., one of the continuities of the potential and the flux fails to hold. The motivation to ask such a question is related to the stress in biological systems. In biological systems, inclusions, such as cells, often take forms of membrane consisting of the core and the shell and such membranes are bonded to the matrix. The membrane structure is approximated by the imperfect bonding condition by sending the width of the shell to zero as shown in \cite{BM1999}. Thus the question is whether the stress is reduced to finite with the imperfect bonding.

It is quite interesting to observe that neutral inclusions provide such inclusions with finite stress. If we arrange a single circular inclusion with a specially chosen imperfect bonding parameter or a membrane of the concentric core-shell with specially chosen conductivity, then the inclusion becomes neutral to uniform fields, i.e., the insertion of the inclusion does not perturb the uniform fields (\cite{Hashin62, Torquato-Rintoul95}). Even if we insert two of them, the uniform field is not perturbed. Thus the gradient of the solution is not perturbed, in particular bounded, regardless of the distance between such inclusions. We emphasize that a uniform field can cause blow-up of stress between inclusions with perfect bonding.

We consider in this paper the geometrically simplest possible case when inclusions are disks of the same radii and prove that the stress is bounded regardless of the distance between two inclusions provided that the imperfect bonding parameter is finite.

Let us now put things in precise terms. Let $D_1$ and $D_2$ be bounded simply connected domains in $\Rbb^d$ ($d\geq 2$) whose closures are disjoint. We assume that they are strictly convex and have smooth boundaries for convenience even though the assumptions can be relaxed slightly. We assume that the conductivity $k_m$ of the matrix $\Rbb^d \setminus \ol{D_1 \cup D_2}$ is 1, while that of the inclusions is $k \neq 1$. Since we will be considering the case when $k=\infty$ or $k=0$, we assume that inclusions have the same conductivity. We consider the following conductivity problem:
\begin{equation}\leqnomode
\label{PB}\tag{PB}
\begin{cases}
\GD u = 0  \quad&\mbox{in } \Rbb^d \setminus (\p D_1 \cup \p D_2), \\
\ds u|_+ - u|_- = 0  \quad&\mbox{on }\p D_j, \ j=1,2 , \\
\ds \p_\nu u|_+ - k \p_\nu u|_- = 0  \quad&\mbox{on }\p D_j, \ j=1,2 ,\\
\ds u (x) - h(x) = O( |x|^{1-d})~&\mbox{as }|x|\rightarrow\infty.
\end{cases}
\end{equation}
Here and throughout this paper, $\p_\nu$ denotes the outward normal derivative on $\p D_j$ and the subscripts $\pm$ denote the limits (along the normal direction) from outside and inside of $D_j$, respectively, and $h$ is a given function harmonic in $\Rbb^d$ (it represents the loading at $\infty$). The second and third lines in the problem \eqref{PB} are continuity of the potential and the flux, respectively, which characterize the perfect bonding conditions along $\p D_j$.

We are interested in the gradient $\nabla u$ of the solution $u$ to \eqref{PB} since it represents the stress. If $k$ is finite, then $\nabla u$ is bounded as proved in \cite{LN, LV}. However, if $k$ degenerates to either $\infty$ or $0$, then $\nabla u$, namely, the stress may become arbitrarily large as the distance $\Ge$ between $D_1$ and $D_2$ tends to zero depending on given $h$. In the conducting case, namely, when $k=\infty$, \eqref{PB} is reduced to the following problem (C for conducting):
\begin{equation}\leqnomode
    \label{PBC}\tag{PB-C}
\begin{cases}
\GD u = 0  \quad&\mbox{in } \Rbb^d \setminus \ol{D_1 \cup D_2}, \\
u|_+=\text{const.} \quad&\mbox{on }\p D_j, \ j=1,2 , \\
\ds \int_{\p D_j} \p_\nu u|_+ \, \df \Gs=0 \quad&\mbox{on }\p D_j, \ j=1,2 ,\\
\ds u (x) - h(x) = O( |x|^{1-d})~&\mbox{as }|x|\rightarrow\infty.
\end{cases}
\end{equation}
The constant values of $u|_+$ on $\p D_j$ are not given, but determined by the third condition. It is worthwhile to mention that the values on $\p D_1$ and $\p D_2$ are different in general.  This problem has been well-studied. It is shown that $\nabla u$ may blow up as $\Ge$ tends to $0$ depending on the given harmonic function $h$ and the optimal blow-up rate is $\Ge^{-1/2}$ in two-dimensions \cite{AKL, Keller-JAP-63, Yun}, $(\Ge |\ln\Ge|)^{-1}$ in three-dimensions \cite{BLY} (see also \cite{LY-CPDE-09} for the case of spherical inclusions). If $k=0$, namely, inclusions are insulated, then \eqref{PB} is reduced to the following problem (I for insulating):
\begin{equation}\leqnomode
\label{PBI}\tag{PB-I}
\begin{cases}
\GD u = 0  \quad&\mbox{in } \Rbb^d \setminus \ol{D_1 \cup D_2}, \\
\p_\nu u|_+ =0 \quad&\mbox{on }\p D_j, \ j=1,2 , \\
\ds u (x) - h(x) = O( |x|^{1-d})~&\mbox{as }|x|\rightarrow\infty.
\end{cases}
\end{equation}
The insulating case in two-dimensions is the dual problem to the conducting case, and the optimal rate of blow-up of $\nabla u$ is the same as that of conducting case (see, for example, \cite{AKL}). However, the three-dimensional case has not been solved for a while. Quite recently significant progress has been made and an optimal blow-up rate is obtained in \cite{DLY22, DLY21} (see also \cite{Yun16}). We mention that more general estimates, including higher order derivatives, obtained in \cite{DL} for circular inclusions. There are huge work related with this area of research. We refer to references in \cite{Kang23} for them.

The question we raise in this paper is what happens in a biological system. Biological inclusions such as cells take the form of a membrane consisting of a core and a shell (Figure \ref{fg_core_shell}).
So their bonding to the matrix is different from the perfect bonding by the single interface. The question we address is whether large stress can concentrate in between two closely located biological inclusions.
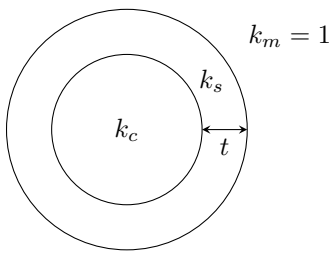
\begin{figure}[htb]
    \centering
    \begin{tikzpicture}[scale=1.0]
        \draw (0, 0) circle [radius=10mm];
        \draw (0, 0) circle [radius=16mm];
        \node (core) at (0, 0) {$k_c$};
        \node (shell) at (30:13mm) {$k_s$};
        \node (matrix) at (30:25mm) {$k_m=1$};
        \draw[<->, >=stealth] (10mm, 0)--(16mm, 0);
        \node (width) at (13mm, 0) [below] {$t$};
    \end{tikzpicture}
    \caption{Membrane of core-shell structure: $k_c, k_s, k_m$ are conductivities of the core, shell, and matrix, respectively, $t$ is the width of the shell.}
    \label{fg_core_shell}
\end{figure}

We do not directly deal with the core-shell structure. Instead we deal with approximations of such structures obtained by sending the width of the shell to $0$ which is derived in  \cite{BM1999}. To explain approximations let $\GO$ be an inclusion of the core-shell structure. Denote the conductivities of the core and the shell by $k_c$ and $k_s$, respectively, and the width of the shell by $t$ (Figure \ref{fg_core_shell}). Note that the core-shell structure has two interfaces: the boundary of the core and the outer boundary of the shell. Each interface satisfies the perfect bonding condition. Now we send the thickness $t$ of the shell to $0$. If $\Gg^{-1}:=\lim_{t \to 0} k_s/t$ exists and is positive, then the interface conditions (the second and third lines in \eqref{PB}) are changed to the following condition:
\beq
\p_\Gv u|_+ = k \p_\Gv u|_- = \Gg^{-1} \left(u|_+ - u|_- \right) \text{ on } \p \GO,
\eeq
where $k=k_c$. This is a kind of a Robin-type interface condition and shows that the continuity of the flux  still holds on the limiting single interface, but the continuity of the potential fails to hold. So, this is called an imperfect bonding in comparison to the perfect bonding. Since the limit of $k_s/t$ exists, $k_s$ needs to tend to $0$, and hence this approximation is called a LC-type (Low Conductivity).
If $\Ga:= \lim_{t \to 0} k_s t$ exists and $\Ga >0$, then it is shown in the same paper that the interface conditions approach to the conditions $u|_+ = u|_-$ and
\beq
k \p_\Gv u|_- - \p_\Gv u|_+  = \Ga \GD_S u + \nabla_S u \cdot \nabla_S \Ga \ \text{ on } \p \GO,
\eeq
where $\GD_S$ and $\nabla_S$ are the surface Laplacian and gradient on $\p\GO$, respectively. So, the potential stays continuous, but the flux becomes discontinuous. This is called HC-type (High Conductivity) approximation since $k_s$ needs to tend to $\infty$. We emphasize that the parameters $\Gg, \Ga$ may be variables even though we only consider constant parameters in this paper.

The imperfect bonding problems with the two inclusions, $D_1$ and $D_2$, are formulated as follows:
the LC-type problem is
\begin{equation}\leqnomode
\label{LC}\tag{LC} 
\begin{cases}
\GD u = 0 &\text{ in } \Rbb^d \setminus (\p D_1 \cup \p D_2), \\
\p_\Gv u|_+ = k \p_\Gv u|_- = \Gg^{-1} \left(u|_+ - u|_- \right) &\text{ on } \p D_j \ (j=1,2), \\
u(x) - h(x)  = O(|x|^{1-d}) &\text{ as } |x| \to \infty,
\end{cases}
\end{equation}
and the HC-type problem is
\begin{equation}\leqnomode
\label{HC}\tag{HC} 
\begin{cases}
\GD u = 0 &\text{ in } \Rbb^d \setminus (\p D_1 \cup \p D_2), \\
u|_+ = u|_- &\text{ on } \p D_j \ (j=1,2), \\
k \p_\Gv u|_- - \p_\Gv u|_+  = \Ga \GD_S u + \nabla_S u \cdot \nabla_S \Ga &\text{ on } \p D_j \ (j=1,2), \\
u(x) - h(x)  = O(|x|^{1-d}) &\text{ as } |x| \to \infty.
\end{cases}
\end{equation}
The problem is to estimate the gradient $\nabla u$ of the solution in terms of the distance $\Ge$ between $D_1$ and $D_2$. In comparison to the perfect bonding problems, we are particularly interested in the cases when $k=\infty$ for \eqref{LC} and $k=0$ for \eqref{HC}. Since presumably a biological system cannot endure large stress, we conjecture that $\nabla u$ is bounded regardless of $\Ge$ provided that $\Gg$ and $\Ga$ stay away from $0$. The purpose of this paper is to prove the conjecture in a special case.

We deal with the case when $D_1$ and $D_2$ are two-dimensional disks of the same radii. We also assume that $k=\infty$ and $\Gg$ is constant for \eqref{LC}, and that $k=0$ and $\Ga$ is constant for \eqref{HC}. Since $u|_-$ is constant, the \eqref{LC} becomes
\begin{equation}\leqnomode
\label{LCC}\tag{LC-C}
\begin{cases}
\GD u = 0 &\text{in } \Rbb^2 \setminus \ol{(D_1 \cup D_2)}, \\
u|_+ - \Gg\p_\Gv u|_+ = \text{const.} &\text{on } \p D_j \ (j=1,2), \\
\ds \int_{\p D_j} \p_\nu u|_+ \, \df \Gs=0 & j=1, 2, \\
u(x) - h(x)  = O(|x|^{-1}) &\text{as } |x| \to \infty.
\end{cases}
\end{equation}
If $\Gg=0$, then the problem becomes \eqref{PBC}. On the other hand, \eqref{HC} becomes
\begin{equation}\leqnomode
\label{HCI}\tag{HC-I} \
\begin{cases}
\GD u = 0 &\text{ in } \Rbb^2 \setminus \ol{(D_1 \cup D_2)}, \\
- \p_\Gv u|_+  = \Ga \GD_S u  &\text{ on } \p D_j \ (j=1,2), \\
u(x) - h(x)  = O(|x|^{-1}) &\text{ as } |x| \to \infty.
\end{cases}
\end{equation}
If $\Ga=0$, then this problem becomes \eqref{PBI}.
We further assume that the given harmonic function $h$ is linear. This is to compare the results of this paper with the known results for the perfect-bonding problem. It is proved in \cite{AKLLL} that if $h(x)=a \cdot x$ for some constant vector $a$ which is not perpendicular to the line connecting centers of two circular inclusions, then the gradient of the solution to \eqref{PBC} (and \eqref{PBI}) blows up at the rate of $\Ge^{-1/2}$. So, we set
\beq\label{D1D2}
D_1=B_r (-r-\Ge/2, 0) \quad \mbox{and} \quad D_2=B_r (r+\Ge/2, 0),
\eeq
after a rotation and a translation if necessary, where $B_r(a)$ denotes the disk of radius $r$ centered at $a$. We then let $h(x_1,x_2)=x_1$ for \eqref{LCC} and $h(x_1,x_2)=x_2$ for \eqref{HCI}.

We now introduce a function space where the solution to \eqref{LCC} uniquely exists. Let $D=D_1\cup D_2$, and define the local Sobolev space $H_\mathrm{loc}^1 (\Rbb^2\setminus D)$ (not $H_\mathrm{loc}^1 (\Rbb^2\setminus \overline{D})$) to be the collection of $u\in L^1_\mathrm{loc}(\Rbb^2\setminus D)$ such that $u|_{O\setminus \overline{D}}$ belongs to the $L^2$-Sobolev space of exponent $1$ on $O\setminus \overline{D}$ for any open disk $O\subset \Rbb^2$ including $\overline{D}$. We denote by $h+H_\mathrm{loc}^1 (\Rbb^2\setminus D)$ the space of all functions which can be represented as a sum of the function $h$ and a function in $H^1_\mathrm{loc}(\Rbb^2\setminus D)$.

The main results of this paper for \eqref{LCC} are the following.
\begin{theo}\label{mainLC}
Let $D_1$ and $D_2$ be as in \eqref{D1D2} and let $h(x_1,x_2)=x_1$. For each $\Gg>0$ and $\Ge >0$ there exists a unique solution $u \in h+H^1_\mathrm{loc} (\Rbb^2\setminus D)$ to \eqref{LCC}. Moreover, for each $\Gg_0>0$ and $\Ge_0>0$, there is a constant $C$ independent of $\Ge\in (0, \Ge_0]$ such that
    \beq\label{LC-C_gradient_estimate}
        \|\nabla (u-h)\|_{L^\infty (\Rbb^2\setminus \overline{D})}\leq C
    \end{equation}
    for all $\Gg \ge \Gg_0$.
\end{theo}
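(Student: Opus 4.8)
The plan is to reduce the PDE problem \eqref{LCC} to a discrete problem via potential theory and then analyze the resulting discrete operator uniformly in $\varepsilon$. First I would represent the solution as $u = h + \mathcal{S}_{\partial D_1}[\varphi_1] + \mathcal{S}_{\partial D_2}[\varphi_2]$ using single-layer potentials with densities $\varphi_j$ of mean zero on $\partial D_j$; the decay condition at $\infty$ and the flux conditions $\int_{\partial D_j}\partial_\nu u|_+\,d\sigma=0$ are then automatic. The remaining Robin-type condition $u|_+ - \gamma\,\partial_\nu u|_+ = c_j$ (constant) on each $\partial D_j$ becomes an integral equation of the form $(\tfrac12 I + \mathcal{K}^*_{\partial D}) - \gamma$-deformation applied to $(\varphi_1,\varphi_2)$, matched against the boundary data $-h|_{\partial D_j}$ modulo constants. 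Because the $D_j$ are disks, I would pass to the bipolar (Möbius) coordinate system adapted to the pair of circles, in which the Neumann--Poincaré operator diagonalizes: the natural basis functions on $\partial D_j$ are $\{e^{\iu n\theta}\}$ and the kernel acts by an explicit geometric factor $e^{-n|\log\rho|}$ where $\rho=\rho(\varepsilon)\to 1$ as $\varepsilon\to 0$. This is the step where the closeness of the inclusions is encoded, and where the known blow-up for $\gamma=0$ originates.

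The core of the argument is then a discrete spectral analysis. Expanding the densities in the Fourier basis on each circle produces two sequences of coefficients $(a_n)_{n\ge1}$, $(b_n)_{n\ge1}$, and the integral equation becomes a system of second-order difference equations in $n$ — essentially a discrete Schrödinger operator on $\ell^2(\mathbb{Z}_{\ge1})\oplus\ell^2(\mathbb{Z}_{\ge1})$ with a coupling term of size $\rho^{2n}$ between the two circles and a diagonal term modified by $\gamma n$ (the factor $n$ coming from the fact that $\partial_\nu \mathcal{S}_{\partial D_j}$ on a circle scales like the tangential frequency). The linear datum $h(x_1,x_2)=x_1$ contributes only to the lowest modes (in bipolar coordinates, $x_1$ is a combination of $\coth$ and the coordinate functions, so its boundary trace has controlled, rapidly decaying Fourier tail). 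I would then show that the symbol of the inverse operator is bounded, with the crucial point being that the $\gamma n$ term (present because $\gamma \ge \gamma_0 > 0$) dominates the singular part of the off-diagonal coupling uniformly as $\rho\to1$: the dangerous near-resonance that causes blow-up at $\gamma=0$ is a simple eigenvalue crossing, and the Robin term shifts it away by a gap proportional to $\gamma_0$. Concretely, one estimates $\sum_n n^2(|a_n|^2 + |b_n|^2)$ (which controls $\|\nabla(u-h)\|_{L^\infty}$ after a trace/Sobolev argument in the exterior region, using that the potentials are harmonic and smooth away from $\partial D$) by the datum's norm with a constant depending only on $\gamma_0$ and $\varepsilon_0$.

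The main obstacle I anticipate is the uniform invertibility of the discrete operator as $\varepsilon\to 0$, i.e., showing the constant $C$ does not degenerate. One must handle the regime $n\sim |\log\rho|^{-1}\sim \varepsilon^{-1/2}$ where both the coupling $\rho^{2n}$ is of order $1$ and $\gamma n$ is large: here a careful two-scale or transfer-matrix analysis of the difference equation is needed, comparing solutions to an explicit model (constant-coefficient or Bessel-type) and showing the relevant boundary-value problem for the difference equation has no small eigenvalues. Existence and uniqueness in $h+H^1_{\mathrm{loc}}(\mathbb{R}^2\setminus D)$ follows more routinely from a Lax--Milgram / Fredholm argument on the associated bilinear form $\int_{\mathbb{R}^2\setminus\overline{D}}|\nabla u|^2 + \gamma^{-1}\sum_j\int_{\partial D_j}|u|_+ - c_j|^2$, once one checks coercivity modulo the finite-dimensional space of locally constant functions; I would do that first to fix the functional-analytic setting before turning to the quantitative estimate.
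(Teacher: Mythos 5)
Your high-level strategy — expand in Fourier modes on circles, recognize a discrete Schr\"odinger-type operator in the frequency variable $n$, and argue that the $\gamma n$ (Robin) term keeps that operator uniformly invertible — is the same as the paper's, and your Lax--Milgram/energy argument for existence and uniqueness is essentially what the paper does. The framing through layer potentials and the Neumann--Poincar\'e operator is a legitimate alternative to the paper's direct M\"obius-transform-to-annulus construction, and both routes end at a one-dimensional difference equation.

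However, the mechanism you propose for the quantitative estimate has a real gap. You plan to show the weighted $\ell^2$ quantity $\sum_n n^2(|a_n|^2+|b_n|^2)$ is bounded uniformly in $\varepsilon$ and then pass to $\|\nabla(u-h)\|_{L^\infty}$ by a trace/Sobolev argument. This cannot work as stated: even for the actual solution, the sharp behaviour of the coefficients is $a(n)\asymp \beta\, n\,\rho^n$ with $\rho\to 1$ and $\beta\sim\sqrt{\varepsilon}$ as $\varepsilon\to 0$, so $\sum n^2 a(n)^2 \asymp \beta^2\sum n^4\rho^{2n}\asymp \beta^2(1-\rho)^{-5}\asymp \varepsilon^{-3/2}\to\infty$. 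The weighted $\ell^2$ norm genuinely diverges, while the gradient stays bounded. Moreover, any fixed-order trace/Sobolev embedding from the boundary into the exterior has a constant that degenerates as the gap shrinks, which is precisely the effect that must be beaten. The boundedness of $\nabla u$ is therefore \emph{not} visible at the level of $\ell^2$ norms of the coefficients; it comes from a cancellation. In the paper this cancellation is extracted in two places that your proposal does not account for: (i) a discrete comparison/maximum principle (Theorem \ref{theo_comparison}) that upgrades the $\ell^2$ invertibility of $H$ to the sharp \emph{pointwise} two-sided bound $a(n)\asymp \beta\, n\,\rho^n$ (Theorem \ref{theo_agn_estimate}); and (ii) a double Abel summation by parts that pairs $\partial_\zeta F(\zeta)$ against the Jacobian factor $(\zeta-1)^2/\beta$ of the M\"obius map (equation \eqref{eq_grad_abel}), so that what has to be summed is $\lap_\mathrm{D}d(n)\,\zeta^n$ rather than $d(n)\,\zeta^n$. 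The discrete second difference of the coefficients is small precisely because $a(n)$ is close to the exact comparison sequence $n\rho^n$, and \emph{that} is what keeps the total bounded. Your remark that a ``two-scale or transfer-matrix analysis'' is needed near $n\sim\varepsilon^{-1/2}$ correctly flags the dangerous regime, but the substitute you would actually need there is exactly the pointwise comparison-function bound, not a refinement of the $\ell^2$ spectral estimate. Also, a minor point: $H$ is not translation-invariant (the potential $V$ depends on $n$ and the $\gamma n$ weight grows), so there is no ``symbol'' to bound; the paper's substitute is the explicit super/subsolution pair $g(n)=n\rho^n$.
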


We obtain the following theorem for the problem \eqref{HCI}.

\begin{theo}\label{mainHC}
Let $D_1$ and $D_2$ be as in \eqref{D1D2} and let $h(x_1,x_2)=x_2$. For each $\Ga>0$ and $\Ge >0$ there exists a unique solution $u \in h+H^1_\mathrm{loc}(\Rbb^2\setminus D)$ to \eqref{HCI}. Moreover, for each $\Ga_0>0$ and $\Ge_0>0$, there is a constant $C$ independent of $\Ge\in (0, \Ge_0]$ such that
    \beq\label{HC-C_gradient_estimate}
        \|\nabla (u-h)\|_{L^\infty (\Rbb^2\setminus \overline{D})}\leq C
    \end{equation}
    for all $\Ga \ge \Ga_0$.
\end{theo}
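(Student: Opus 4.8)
\emph{Overview and well-posedness.} The plan is to conformally straighten the geometry to a round annulus, reduce \eqref{HCI} to a single three-term recurrence --- a discrete Schr\"odinger operator that is made confining by the membrane term $\alpha\Delta_S$ --- and read off the gradient bound from decay properties of that operator. For well-posedness, the weak form of \eqref{HCI} is $\int_{\Rbb^2\setminus\overline D}\nabla u\cdot\nabla\phi\,dx+\alpha\sum_j\int_{\partial D_j}\partial_s u\,\partial_s\phi\,d\sigma=0$ for test $\phi$ decaying at infinity, $\partial_s$ the arclength derivative on $\partial D_j$; on the completion of $h+C_c^\infty(\Rbb^2\setminus\overline D)$ under $\|\nabla v\|_{L^2}^2+\alpha\sum_j\|\partial_s v\|_{L^2(\partial D_j)}^2$ (modulo constants) this bilinear form is bounded and coercive, so Lax--Milgram yields the unique $u\in h+H^1_{\mathrm{loc}}(\Rbb^2\setminus D)$, and finiteness of this seminorm forces $\sum_m|m|\,|\widehat u_m|^2<\infty$ for the Fourier coefficients introduced below.

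\emph{Reduction to a Jacobi matrix.} Since $D_1,D_2$ are disjoint disks, a M\"obius map $\Psi$ carries $\Rbb^2\setminus\overline D$ onto $A_\tau=\{e^{-\tau}<|w|<e^{\tau}\}$ with $\partial D_1\leftrightarrow\{|w|=e^{-\tau}\}$ and $\tau=\tau(\varepsilon)\to0$ as $\varepsilon\to0$; the reflections $x_1\mapsto-x_1$ and $x_2\mapsto-x_2$ become $w\mapsto1/\bar w$ and $w\mapsto\bar w$, so after a rotation $\Psi(\infty)=1$ and $\Psi(0)=-1$. Set $\mathcal W=(u-h)\circ\Psi^{-1}$, harmonic in $A_\tau$, and $H=h\circ\Psi^{-1}=\Im\Psi^{-1}$, harmonic in $|w|<1$ with a single pole at $w=1$. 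The crucial point is that $\Psi$ is M\"obius: on $\{|w|=e^{-\tau}\}$ the density is $|(\Psi^{-1})'(w)|=|\mathrm{Res}|\,|w-1|^{-2}$ ($\mathrm{Res}$ the residue of $\Psi^{-1}$ at $w=1$), so its reciprocal is a degree-one trigonometric polynomial $\alpha_0+2\alpha_1\cos\phi$ with $\alpha_0>2|\alpha_1|>0$ and $\alpha_1<0$, while $\int_{|w|=e^{-\tau}}|(\Psi^{-1})'|\,|dw|=2\pi r$ fixes $|\mathrm{Res}|=r(e^{\tau}-e^{-\tau})$. Transporting $-\partial_\nu u|_+=\alpha\Delta_S u$ through $\Psi$ (with $\partial_\nu=|(\Psi^{-1})'|^{-1}\partial_{|w|}$ and $\Delta_S=\partial_s^2$), expanding $u|_{\partial D_1}=\sum_m\widehat u_m e^{im\phi}$, and using the symmetry-forced form of $\mathcal W$, the boundary condition becomes the three-term recurrence
\[
\tfrac{\alpha\alpha_1}{e^{-\tau}}(m-1)\widehat u_{m-1}+\Bigl(\tanh(m\tau)+\tfrac{\alpha\alpha_0}{e^{-\tau}}m\Bigr)\widehat u_m+\tfrac{\alpha\alpha_1}{e^{-\tau}}(m+1)\widehat u_{m+1}=\widehat H_m\bigl(1+\tanh(m\tau)\bigr),\quad m\ge1,
\]
i.e.\ $\mathcal J\widehat u=d$ for a Jacobi matrix $\mathcal J$ in which the membrane term supplies the confining diagonal $\asymp\alpha m/(r\tau)$ together with the negative off-diagonal $\asymp-\alpha m/(r\tau)$. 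The data is controlled by conformal invariance of the Dirichlet integral: $\sum_m|m|\,|\widehat H_m|^2=\tfrac1{2\pi}\int_{|w|<e^{-\tau}}|(\Psi^{-1})'|^2=\tfrac1{2\pi}\mathrm{Area}(D_1)=\tfrac12 r^2$, uniformly in $\varepsilon$.

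\emph{Uniform coercivity and the energy bound.} Conjugating by $\mathrm{diag}(m^{-1/2})$ symmetrises $\mathcal J$, and a direct computation gives the diagonal excess
\[
\mathcal J_{mm}-\sum_{k\ne m}|\mathcal J_{mk}|=\tanh(m\tau)+\frac{\alpha(1-e^{-\tau})}{r(1+e^{-\tau})}\,m+\frac{\alpha}{r(e^{\tau}-e^{-\tau})}\,\delta_m,\qquad \delta_m:=2m-\sqrt{m(m-1)}-\sqrt{m(m+1)}\ge\tfrac1{4m}.
\]
The second term ($\asymp\alpha\tau m$) and the third ($\asymp\alpha/(\tau m)$) have opposite monotonicity, so by AM--GM their sum is $\gtrsim\alpha/r$ for every $m$ and every $\tau$; hence $\mathcal J\ge c_0 I$ with $c_0=c_0(\alpha_0,\varepsilon_0)>0$ uniform over $\varepsilon\in(0,\varepsilon_0]$ and $\alpha\ge\alpha_0$. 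Together with the data bound this yields $\sum_m m|\widehat u_m|^2\le C$ uniformly. Since moreover $\mathcal J_{mm}\to\infty$, $\mathcal J$ is a confining discrete Schr\"odinger operator, and a Combes--Thomas / discrete Agmon argument upgrades the energy bound to uniform (super-)geometric decay of $\widehat u_m$ and of all annulus-Fourier coefficients of $\mathcal W$.

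\emph{From the coefficients to the gradient; the main obstacle.} Writing $u-h=\mathcal W\circ\Psi$ in its annulus series and using $|\nabla_z(u-h)(z)|=|\nabla_w\mathcal W(w)|/|(\Psi^{-1})'(w)|$, the decay estimates bound $\|\nabla(u-h)\|_{L^\infty}$ on $\{\mathrm{dist}(\cdot,\partial D)\ge\delta_0\}$ for fixed $\delta_0$. The genuinely hard part --- where the bulk of the work lies --- is near $z=0$, i.e.\ $w=-1$, the mouth of the $O(\varepsilon)$-wide neck between the inclusions, where $|(\Psi^{-1})'(w)|\asymp r\sinh\tau$: there the energy bound alone is not enough, and one must show $|\nabla_w\mathcal W(-1)|=\bigl|\sum_m m(-1)^m(\widehat u_m-\widehat H_m)/\cosh(m\tau)\bigr|=O(\tau)$, which needs genuine cancellation in this alternating sum coming from regularity in $m$ of the solution of the recurrence. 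A more robust alternative is a local estimate directly in the thin neck: $u-h$ is harmonic there, nearly independent of the transverse variable, and on the two walls satisfies $-\partial_\nu(u-h)|_+-\alpha\Delta_S(u-h)=\alpha\Delta_S h$; the transverse flux is then $O(\varepsilon^{-1})$ times a small transverse oscillation while the $\alpha\Delta_S$ term pins the longitudinal profile to a fixed ordinary differential equation, giving $\|\nabla(u-h)\|_{L^\infty(\text{neck})}\le C$ uniformly. The treatment of Theorem~\ref{mainLC} is entirely parallel, the conducting condition $u|_+-\gamma\partial_\nu u|_+=\mathrm{const.}$ again producing a Jacobi matrix of the same confining type.
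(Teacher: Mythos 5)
Your proposal takes a genuinely different (and much longer) route from the paper, and it has an acknowledged gap at the decisive step. The paper disposes of Theorem~\ref{mainHC} in a few lines by duality: $h=x_2=\Im z$ is the harmonic conjugate of $\widetilde h=x_1=\Re z$; if $u$ solves \eqref{LCC} with data $\widetilde h$, the third condition $\int_{\partial D_j}\partial_\nu u\,d\sigma=0$ guarantees a single-valued harmonic conjugate $v$ of $u$ in $\Rbb^2\setminus\overline D$ with $v-h=O(|x|^{-1})$; differentiating the Robin condition $u|_+-\gamma\partial_\nu u|_+=\mathrm{const.}$ tangentially and applying Cauchy--Riemann ($\partial_\tau u=-\partial_\nu v$, $\partial_\nu u=\partial_\tau v$) gives $-\partial_\nu v|_+=\gamma\,\partial_\tau^2 v=\gamma\Delta_S v$, so $v$ solves \eqref{HCI} with $\alpha=\gamma$ and the stated data. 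Since $|\nabla v|=|\nabla u|$ pointwise for a harmonic-conjugate pair, the bound \eqref{HC-C_gradient_estimate} is literally the bound \eqref{LC-C_gradient_estimate} already proved, and existence/uniqueness transfer for free (the converse direction uses $\int_{\partial D_j}\partial_\nu v\,d\sigma=-\alpha\int_{\partial D_j}\Delta_S v\,d\sigma=0$). That is the whole proof; reproducing the annulus machinery from scratch, as you do, is unnecessary once Theorem~\ref{mainLC} is in hand.

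As to the content of your direct attempt: the M\"obius reduction, the coercivity of the symmetrised Jacobi matrix (your Gershgorin excess computation and the AM--GM step, with $\delta_m\geq 1/(4m)$, check out), and the uniform energy bound $\sum_m m|\widehat u_m|^2\leq C$ are all plausible. But you then explicitly flag that this is not enough: the factor $|(\Psi^{-1})'(w)|^{-1}\asymp (r\sinh\tau)^{-1}\sim\varepsilon^{-1/2}$ near $w=-1$ means the $L^2$-type bound on $\widehat u$ does not by itself control $\|\nabla(u-h)\|_{L^\infty}$ in the neck, and the required $O(\tau)$ cancellation in the alternating sum $\sum_m m(-1)^m(\widehat u_m-\widehat H_m)/\cosh(m\tau)$ is asserted, not proved. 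That cancellation is precisely what the paper's Section~2 establishes for the LC--C side through the discrete maximum principle (Theorem~\ref{theo_comparison}), the sharp comparison sequences $n\rho^n$ (Theorem~\ref{theo_agn_estimate}), and the double summation by parts \eqref{eq_grad_abel} that extracts two factors of $(\zeta-1)$ from $F'(\zeta)$; an analogous pointwise analysis of your HC Jacobi matrix would have to be carried out for your route to close, and it is omitted. Your ``neck ODE'' alternative is likewise only a sketch. So the proposal, while structurally sound, does not constitute a proof; and even if completed it would be a substantially heavier argument than the paper's one-paragraph duality.
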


The rest of the paper is devoted to proofs of main results. Theorem \ref{mainLC} is proved by constructing solutions in terms of infinite series and estimating the coefficients. For that purpose, we use the discrete Laplacian in an essential way. Theorem \ref{mainHC} is proved by showing duality of the problems \eqref{LCC} and \eqref{HCI}.

\section{Proof of Theorem \ref{mainLC}}\label{sect_discs}

Let us first show that the solution to \eqref{LCC} is unique. If $u \in H^1_\mathrm{loc}(\Rbb^2\setminus D)$ is the solution to \eqref{LCC} with $h=0$, then we have from the second and third conditions that
\beq\label{1000}
\int_{\p D_j} \p_\nu u|_+ (u|_+ - \Gg\p_\Gv u|_+) \, \df \Gs=0.
\eeq
It can be shown easily that 
\beq\label{1001}
|\nabla u(x)|=O(|x|^{-2}) \quad\text{as } |x|\to \infty.
\eeq
In fact, since $u$ is harmonic in $\Rbb^2 \setminus \overline{B}$ where $B$ an open disk centered at $0$ containing $\ol{D}$, there exist an entire function $f(z)$ and a holomorphic function $g(z)$ on $\Cbb\setminus \overline{B}$ with $g(z)\to 0$ as $|z|\to \infty$, and a constant $C\in \Rbb$ such that
    \begin{equation}
        \label{eq_Lap_ext_expansion_2d}
        u (z)=\Re (f(z)+g(z))+C\ln |z|, \quad z\in \Cbb \setminus \overline{B}.
    \end{equation}
(See, for example, \cite[Chapter 9]{ABR01}.) Since $u(z)=O(|z|^{-1})$ as $|z|\to \infty$, we infer that $f=0$ and $C=0$. So \eqref{1001} follows. It then follows from \eqref{1000} and \eqref{1001} that 
$$
- \int_{\Rbb^2 \setminus D} |\nabla u|^2 \, \df x - \Gg \sum_{j=1}^2 \int_{\p D_j} |\p_\nu u|_+|^2  \, \df \Gs=0.
$$
Since $\Gg >0$, we have $u=0$.

In the rest of this section we construct the solution and estimate its gradient.

\subsection{M\"obius transform}

We first introduce two quantities which play essential roles throughout this paper. Let
\begin{equation}\label{eq_rho_defi}
    \Gr:=\frac{\sqrt{4r+\Ge}-\sqrt{\Ge}}{\sqrt{4r+\Ge}+\sqrt{\Ge}}
\end{equation}
and
\begin{equation}\label{eq_beta}
    \Gb:=\sqrt{\Ge (4r+\Ge)}.
\end{equation}
Following \cite{Ji-Kang22}, we consider the M\"obius transform
\begin{equation}\label{eq_Moebius}
    T (z):=\frac{\Gb}{z-\Gb/2}+1, \quad z\in \widehat{\Cbb}:=\Cbb\cup \{\infty\}.
\end{equation}
Then one can easily see that
\begin{align*}
    T (D_1)=B_{\Gr}(0), \quad
    T (D_2)=\widehat{\Cbb}\setminus \overline{B_{\Gr^{-1}} (0)}.
\end{align*}

Let
\beq
A:=B_{\Gr^{-1}}(0)\setminus \overline{B_{\Gr}(0)} = T(\widehat{\Cbb} \setminus \overline{D}).
\eeq
For a solution $u$ to \eqref{LCC} let
$$
U(\Gz) = (u \circ T^{-1})(\Gz), \quad  \Gz \in A.
$$
We then look into conditions which $U$ needs to satisfy.

Let $\Gz = T(z)$. The relationship between the outward normal $\Gv_\Gz$ on $\p A$ and $\Gv_z$ on $\p D$ is given by
\begin{align}
    \Gv_\Gz = -\frac{T'(z)}{|T'(z)|} \Gv_z = \frac{\overline{z}-\Gb/2}{z- \Gb/2} \Gv_z,
\end{align}
where the minus sign is taken due to the fact that $T$ maps from the inside $D$ to the outside $A$. Therefore, we have
\begin{equation} \label{eq_normal}
\p_{\Gv_{\Gz}} = 2\Re \left( \Gv_\Gz  \frac{\p}{\p \Gz} \right) = -\frac{\Gb}{|\Gz - 1|^2}2\Re \left( \Gv_z  \frac{\p}{\p z} \right) = -\frac{\Gb}{|\Gz - 1|^2} \p_{\Gv_z}.
\end{equation}
Thus, the second line in \eqref{LCC} is translated to
$$
U|_- + \frac{\Gg |\zeta-1|^2}{\Gb} \p_\Gv U|_- = \text{const.} \quad \text{on } \p A. 
$$
One can also see that the third line in \eqref{LCC} is translated to
$$
    \int_{|\zeta|=\Gr^{\pm 1}} \p_\Gv U|_-\, \df \sigma=0.
$$
So, $U$ is the solution to the following problem which we denote by \eqref{LCCA} (A for annulus):
\begin{equation}\leqnomode
\label{LCCA}\tag{LC-C-A}
\begin{cases}
    \GD U=0 & \text{in } A \setminus \{ 1 \}, \\
    \ds U|_- + \frac{\Gg |\zeta-1|^2}{\Gb} \p_\Gv U|_- = \text{const.} & \text{on } \p A, \\
    \ds \int_{|\zeta|=\Gr^{\pm 1}} \p_\Gv U|_- \, \df \sigma=0,  \\
    \ds U (\zeta)-\frac{\Gb}{2}-\Re\frac{\Gb}{\zeta-1}=O(|\zeta-1|) & \text{as } \Gz\to 1,
\end{cases}
\end{equation}
where the last condition holds since we deal with the case when $h(z)=\Re z$. Here $[\cdot]|_-$ is the boundary value from the interior of $A$ and the normal vector $\nv$ is outward from $A$.

\subsection{Construction of the solution}

Let $U$ be the solution to \eqref{LCCA}. Then, $U(\Gz) - \Re \frac{\Gb}{\Gz - 1} - \frac{\Gb}{2}$ is a real-valued harmonic function in $A$, and hence $U$ takes the form
$$
    U(\Gz) = \Re \frac{\Gb}{\Gz - 1} + \frac{\Gb}{2} + \Re \left(f(\Gz) + g(\Gz)\right)  + C \ln |\Gz|
$$
for some constant $C$, where $f$ is a holomorphic function in $|\Gz| < \Gr^{-1}$ and $g$ a holomorphic function in $|\Gz| > \Gr$ such that $g(\Gz) \to 0$ as $|\Gz| \to \infty$ (see, for example, \cite{ABR01}).

Because of the third condition in \eqref{LCCA}, we have $C=0$. Note that $D$ is symmetric with respect to the imaginary axis. Since $h(z)$ is anti-symmetric with respect to the imaginary axis, so is the solution $u$ by the uniqueness of the solution. Thus $U$ is anti-symmetric with respect to the inversion over $|\Gz|=1$, namely,
\begin{align} \label{symmetry_U}
U(\Gz) = - U(\overline{\Gz}^{-1}).
\end{align}
So, $U$ is of the form
\begin{equation}\label{eq_sol_disks}
    U(\Gz)=\frac{\Gb}{2}+\Re \left(\frac{\Gb}{\Gz-1}+F (\Gz)-F (\Gz^{-1})\right)
\end{equation}
for some holomorphic function $F (\Gz)$ of the form
\begin{equation}
    \label{eq_F_exp}
    F (\Gz)=\sum_{n=1}^\infty c(n)\Gz^n \quad (c: \Nbb \to \Rbb).
\end{equation}

It will turn out that the radius of convergence for the above power series is at least $\Gr^{-2}$, which is greater than $\Gr^{-1}$ (Theorem \ref{theo_convergence}). Thus the function $U$ in \eqref{eq_sol_disks} satisfies \eqref{LCCA} except the second line.
In what follows, we determine the coefficients $c(n)$ so that $U$ satisfies the second line.

We obtain the cosine-Fourier expansions on $\p A$:
\[
    U (\Gr\e^{\iu\theta})=-\frac{\Gb}{2}+\sum_{n=1}^\infty (-\Gb \Gr^n-(\Gr^{-n}-\Gr^n)c(n) )\cos (n\theta)
\]
and
\[
    U (\Gr^{-1}\e^{\iu\theta})=\frac{\Gb}{2}+\sum_{n=1}^\infty (\Gb \Gr^n+(\Gr^{-n}-\Gr^n)c(n))\cos (n\theta).
\]
Furthermore, since $\p_\nv=-\p_{|\Gz|}$ on $\p B_{\Gr}(0)$ and $\p_\nv=\p_{|\Gz|}$ on $\p B_{\Gr^{-1}}(0)$, we obtain
\begin{align*}
    \p_\Gv U|_-(\Gr\e^{\iu\theta})
    &=\sum_{n=1}^\infty n (\Gb \Gr^{n-1}-(\Gr^{n-1}+\Gr^{-n-1})c(n))\cos (n\theta) \\
    &=\Gr^{-1}\sum_{n=1}^\infty a (n) \cos (n\theta)
\end{align*}
and
\begin{align*}
    \p_\Gv U|_- (\Gr^{-1}\e^{\iu\theta})
    &=\sum_{n=1}^\infty n (-\Gb \Gr^{n+1}+(\Gr^{-n+1}+\Gr^{n+1})c(n))\cos (n\theta) \\
    &=-\Gr\sum_{n=1}^\infty a (n) \cos (n\theta),
\end{align*}
where we set
\begin{equation}\label{eq_ab}
\begin{aligned}
    a (n)&:=n (\Gb \Gr^n -(\Gr^{-n}+\Gr^n)c(n)).
\end{aligned}
\end{equation}

Let $\lap_\mathrm{D}$ be the the discrete Laplacian on $\Nbb$ with the Dirichlet boundary condition, namely, for a complex sequence $\xi: \Nbb\to \Cbb$,
\[
    (\lap_\mathrm{D}\xi) (n):=
    \begin{cases}
        \xi (n+1)+\xi (n-1)-2\xi (n) & \text{if } n\geq 2, \\
        \xi (2)-2\xi (1) & \text{if } n=1.
    \end{cases}
\]
Set
\[
    \mu:=\Gr+\Gr^{-1}-2.
\]
Then, we have
\begin{align*}
    |\Gr\e^{\iu\theta}-1|^2 \p_\Gv U|_- (\Gr\e^{\iu\theta}) =-a (1)
    +\sum_{n=1}^\infty (-\lap_\mathrm{D}+\mu I)a (n) \cos (n\theta).
\end{align*}
and
\begin{align*}
    |\Gr^{-1}\e^{\iu\theta}-1|^2 \p_\Gv U|_- (\Gr^{-1}\e^{\iu\theta})
    =a (1)-\sum_{n=1}^\infty (-\lap_\mathrm{D}+\mu I)a (n) \cos (n\theta).
\end{align*}
We substitute all the above identities to the second condition in \eqref{LCCA} to obtain
\begin{equation}
    \label{eq_imperfect_annulus_recursive_c0}
    \begin{dcases}
        \Gl_i +\frac{\Gb}{2}=-\frac{\Gg}{\Gb}a (1), \\
        \Gl_e -\frac{\Gb}{2}=\frac{\Gg}{\Gb}a (1)
    \end{dcases}
\end{equation}
and
\begin{equation}
    \label{eq_imperfect_annulus_recursive}
        \Gb \Gr^n+(\Gr^{-n}-\Gr^n)c(n)=\frac{\Gg}{\Gb}(-\lap_\mathrm{D}+\mu I)a (n)
\end{equation}
for $n\geq 1$, where $\Gl_i, \Gl_e$ are constants on ${|\Gz|=\Gr}$ and ${|\Gz|=\Gr^{-1}}$, respectively, to be determined.

The relation \eqref{eq_ab} yields
\begin{equation}\label{eq_cn_ab}
\begin{aligned}
    c(n)&=-\frac{\Gr^na (n)}{n(1+\Gr^{2n})}+\frac{\Gb \Gr^{2n}}{1+\Gr^{2n}},
\end{aligned}
\end{equation}
and hence \eqref{eq_imperfect_annulus_recursive} becomes
\begin{equation}
    \label{eq_imperfect_annulus_recursive_2}
    \frac{2\Gr^n}{1+\Gr^{2n}}-\frac{1-\Gr^{2n}}{\Gb n(1+\Gr^{2n})}a (n)=\frac{\Gg}{\Gb^2}(-\lap_\mathrm{D}+\mu I)a (n)
\end{equation}
for $n\geq 1$. We define the multiplication operator
\[
    (V \xi) (n):=\frac{1- \Gr^{2n}}{\Gb n(1+ \Gr^{2n})}\xi (n)
\]
acting on a complex sequence $\xi: \Nbb\to \Cbb$. We set
\begin{align*}
    f (n):=\frac{2\Gr^n}{1+\Gr^{2n}} \quad (n\in \Nbb).
\end{align*}
Then, \eqref{eq_imperfect_annulus_recursive_2} reads
\begin{equation}
    \label{eq_imperfect_annulus_recursive_3}
    \frac{\Gg}{\Gb^2}(-\lap_\mathrm{D}+\mu I)a+V a=f.
\end{equation}

We find the solution to \eqref{eq_imperfect_annulus_recursive_3} in the Hilbert space $l^2 (\Nbb, \Rbb)$ and investigate the asymptotic behavior as $n\to\infty$. To do so, we define the discrete Schr\"odinger operator
\beq\label{Hdef}
    H:=\frac{\Gg}{\Gb^2}(-\lap_\mathrm{D}+\mu I)+V.
\eeq

\begin{theo}\label{theo_inv_weighted}
    For $\Kbb=\Rbb, \Cbb$ and any $\Gg>0$, the operator $H$ is an invertible bounded linear operator on $l^2 (\Nbb, \Kbb)$.
\end{theo}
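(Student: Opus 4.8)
The plan is to recognize $H$ as a bounded, self-adjoint, \emph{strictly positive} operator and then invoke the elementary fact that such an operator is boundedly invertible. No perturbation or Neumann-series argument is needed; the potential $V$ need not be small relative to the rest.

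First I would establish boundedness. The Dirichlet discrete Laplacian $-\lap_\mathrm{D}$ is a tridiagonal operator with diagonal entries $2$ and off-diagonal entries $-1$, hence bounded on $l^2 (\Nbb, \Kbb)$ with $\|-\lap_\mathrm{D}\| \le 4$ (row sums of absolute values of the matrix entries are at most $4$), and $\mu I$ is trivially bounded. Since $0 < \Gr < 1$, the multiplier defining $V$ obeys
\[
0 \le \frac{1-\Gr^{2n}}{\Gb n(1+\Gr^{2n})} \le \frac{1}{\Gb n} \le \frac{1}{\Gb}
\qquad (n \in \Nbb),
\]
so $V$ is a bounded nonnegative multiplication operator with $\|V\| \le 1/\Gb$. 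Consequently $H$ is bounded, with $\|H\| \le \frac{\Gg}{\Gb^2}(4+\mu) + \frac{1}{\Gb}$, and since every coefficient entering $H$ is real, $H$ is self-adjoint on $l^2(\Nbb, \Cbb)$ and maps $l^2(\Nbb, \Rbb)$ into itself.

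Next I would prove the coercivity estimate. The crucial ingredient is $-\lap_\mathrm{D} \ge 0$, obtained by summation by parts: extending $\xi$ by $\xi(0) := 0$,
\[
\langle (-\lap_\mathrm{D})\xi, \xi\rangle = \sum_{n=0}^\infty |\xi(n+1)-\xi(n)|^2 = |\xi(1)|^2 + \sum_{n=1}^\infty |\xi(n+1)-\xi(n)|^2 \ge 0.
\]
Since $\Gr \ne 1$ we have $\mu = \Gr + \Gr^{-1} - 2 > 0$, and $V \ge 0$ by the bound above, so
\[
\langle H\xi, \xi\rangle = \frac{\Gg}{\Gb^2}\Bigl(\langle (-\lap_\mathrm{D})\xi, \xi\rangle + \mu\|\xi\|^2\Bigr) + \langle V\xi, \xi\rangle \ge \frac{\Gg\mu}{\Gb^2}\|\xi\|^2 .
\]
Because $\Gg > 0$, the constant $c := \Gg\mu/\Gb^2$ is strictly positive. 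A bounded self-adjoint operator satisfying $\langle H\xi, \xi\rangle \ge c\|\xi\|^2$ has spectrum contained in $[c,\|H\|]$, hence is invertible with $\|H^{-1}\| \le 1/c = \Gb^2/(\Gg\mu)$; equivalently, apply Lax--Milgram to the bilinear form $(\xi,\eta)\mapsto \langle H\xi,\eta\rangle$. The same inequality holds verbatim on $l^2(\Nbb,\Rbb)$, so $H$ is invertible there as well, and $H^{-1}$ preserves real-valued sequences.

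I do not expect a genuine obstacle: the entire content is the positivity structure — that $-\lap_\mathrm{D} \ge 0$, that the Dirichlet shift $\mu I$ is \emph{strictly} positive precisely because $\Gr \ne 1$ (which encodes $\Ge>0$), and that the potential $V$ is nonnegative. The only points to treat carefully are the boundary term at $n = 1$ in the summation by parts and the verification that the multiplier defining $V$ is bounded and of one sign.
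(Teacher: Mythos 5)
Your proof is correct and follows essentially the same route as the paper: bounded self-adjoint operator plus the coercivity estimate $\langle H\xi,\xi\rangle \ge (\Gg\mu/\Gb^2)\|\xi\|^2$ (relying on $-\lap_\mathrm{D}\ge 0$, $V\ge 0$, and $\mu>0$), hence spectrum bounded away from $0$ and invertibility. The only cosmetic difference is that you establish $-\lap_\mathrm{D}\ge 0$ by summation by parts while the paper invokes the sine-Fourier transform; both give the same conclusion.
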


\begin{proof}
    It suffices to prove the case when $\Kbb=\Cbb$. It is easy to see the boundedness and the positivity of $-\lap_\mathrm{D}$ on $l^2 (\Nbb, \Cbb)$ by the sine-Fourier transform. Since $-\lap_\mathrm{D}$ and $V$ are bounded positive operators on $l^2 (\Nbb, \Cbb)$, we obtain the boundedness of $H$ on $l^2 (\Nbb, \Cbb)$ and the inequality
    \begin{align*}
        \jbk{H \xi, \xi}_{l^2}
        =\frac{\gamma}{\beta^2} (\jbk{-\lap_\mathrm{D}\xi, \xi}_{l^2}+\mu \|\xi\|_{l^2}^2)+\jbk{V\xi, \xi}_{l^2}\geq \frac{\gamma\mu}{\beta^2}\|\xi\|_{l^2}^2
    \end{align*}
    for any $\xi\in l^2 (\Nbb, \Cbb)$. Thus the spectrum of $H$ is included in the closed set $[\Gg\mu/\Gb^2, \infty)$, which does not contain $0$. Hence $H$ is invertible.
\end{proof}

To look into the decay rate of $a (n)$ ($a$ is the solution to \eqref{eq_imperfect_annulus_recursive_3}) as $n\to\infty$, we investigate the behavior of super- and sub-solution for the operator $H$:
\begin{theo}
    \label{theo_comparison}
    If $\xi\in l^\infty (\Nbb, \Rbb)$ satisfies $H\xi (n)\geq 0$ (resp. $H\xi (n)\leq 0$) for all $n\in \Nbb$, then $\xi (n)\geq 0$ (resp. $\xi (n)\leq 0$) for all $n\in \Nbb$.
\end{theo}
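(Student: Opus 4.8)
The plan is to establish a discrete maximum principle for $H$ by the classical argument --- test at an extremizing index --- made robust against the absence of a finite extremizer by adding an affine barrier. It suffices to prove the sub-solution statement, namely that $H\xi\geq 0$ on $\Nbb$ forces $\xi\geq 0$; the super-solution statement then follows by applying this to $-\xi$. The structural fact I would exploit is that $H=\frac{\Gg}{\Gb^2}(-\lap_\mathrm{D})+W$, where $W$ acts by multiplication by $W(n):=\frac{\Gg\mu}{\Gb^2}+\frac{1-\Gr^{2n}}{\Gb n(1+\Gr^{2n})}$, and that $W(n)>0$ for every $n$: indeed $0<\Gr<1$ gives $\mu=\Gr+\Gr^{-1}-2>0$, and the second summand is manifestly positive. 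Thus $H$ is a discrete Schr\"odinger operator with a strictly positive potential and a Dirichlet-type condition at the endpoint $n=1$, which is exactly the setting in which a comparison principle holds.

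Now I would carry out the barrier argument. The affine sequence $e(n):=n$ is annihilated by $-\lap_\mathrm{D}$ at every $n\geq 1$ --- including $n=1$, because of the Dirichlet convention $(\lap_\mathrm{D}\xi)(1)=\xi(2)-2\xi(1)$ --- so $He(n)=W(n)\,n>0$. Fix $s>0$ and set $\xi_s:=\xi+s\,e$, so that $H\xi_s(n)=H\xi(n)+sHe(n)>0$ for all $n$. Since $\xi\in l^\infty(\Nbb,\Rbb)$ while $s\,e(n)\to\infty$, the real sequence $\xi_s$ is bounded below and tends to $+\infty$, hence attains its minimum over $\Nbb$ at some index $n_0$. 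If $n_0\geq 2$, then $(-\lap_\mathrm{D}\xi_s)(n_0)=2\xi_s(n_0)-\xi_s(n_0+1)-\xi_s(n_0-1)\leq 0$; combined with $W(n_0)>0$ and $H\xi_s(n_0)>0$ this forces $\xi_s(n_0)>0$. If $n_0=1$, then $(-\lap_\mathrm{D}\xi_s)(1)=2\xi_s(1)-\xi_s(2)\leq\xi_s(1)$ since $\xi_s(1)\leq\xi_s(2)$, and once more $H\xi_s(1)>0$ forces $\xi_s(1)>0$. In either case $\xi_s\geq\xi_s(n_0)>0$, so $\xi(n)>-sn$ for every $n\in\Nbb$; letting $s\downarrow 0$ yields $\xi(n)\geq 0$ for all $n$.

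The one genuinely delicate point is that $\xi$ need not attain its infimum over $\Nbb$ --- its value ``at infinity'' may be the infimum --- so one cannot simply evaluate $H$ at a minimizing index of $\xi$ itself; the linear barrier $s\,e$ is introduced precisely to push the minimum of the perturbed sequence to a finite index, after which the one-line maximum-principle computation closes everything. The rest is bookkeeping: the sign of $\mu$ and the positivity of the potential, the fact that affine sequences are $(-\lap_\mathrm{D})$-harmonic including at the boundary index $n=1$, and the separate (but equally elementary) treatment of the endpoint case $n_0=1$.
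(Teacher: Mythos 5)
Your proof is correct, and it takes a genuinely different route from the paper's. The paper argues by contradiction on the set $I^\circ=\{n:\xi(n)<0\}$: it first shows that $I^\circ$ is forward-closed (if $n\in I^\circ$ then $n+1\in I^\circ$), so $I^\circ$ must be a tail $[N+1,\infty)\cap\Nbb$, and then invokes a separate monotonicity lemma (their Lemma \ref{lemm_discrete_lap}, a discrete Phragm\`en--Lindel\"of-type statement for bounded superharmonic sequences) to force $\xi(N)\leq\xi(N+1)<0$, contradicting $N\notin I^\circ$. You instead run the classical barrier argument: since $-\lap_\mathrm{D}$ kills $e(n)=n$ even at $n=1$ under the Dirichlet convention, and the potential $W(n)=\frac{\Gg\mu}{\Gb^2}+V(n)$ is strictly positive, the perturbation $\xi_s=\xi+s\,e$ has $H\xi_s>0$ and attains its minimum at a finite $n_0$, where the one-line maximum-principle computation (including the endpoint case $n_0=1$, which you handle correctly) forces $\xi_s(n_0)>0$; sending $s\downarrow 0$ gives $\xi\geq 0$. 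What your approach buys: it is shorter, self-contained, and eliminates Lemma \ref{lemm_discrete_lap} entirely --- in the paper that lemma serves only this one proof. What the paper's approach buys: Lemma \ref{lemm_discrete_lap} gives quantitative monotonicity information about discrete superharmonic sequences beyond what is strictly needed here, which is a slightly stronger structural statement, but since it is never reused the advantage is marginal. Both proofs correctly hinge on $\mu>0$ (equivalently $0<\Gr<1$) and the positivity of $V$, so the sign hypotheses are exploited identically; the difference is purely in how one converts ``$H\xi\geq 0$ pointwise'' into a global sign conclusion in the absence of a finite minimizer of $\xi$ itself.
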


For the proof of Theorem \ref{theo_comparison}, we prepare the following lemma, which is a discrete analogue of the maximum principle.

\begin{lemm}
    \label{lemm_discrete_lap}
    Let $I=\Nbb \cap [N, \infty)$ for some $N\in \Nbb\cup \{0\}$. If $\xi: I\to \Rbb$ satisfies $-\lap_\mathrm{D}\xi\geq 0$ (resp. $-\lap_\mathrm{D}\xi \leq 0$) on $I\setminus \{N\}$ and is bounded from below (resp. from above), then the following alternative holds.
    \begin{enumerate}[label={\rm (\alph*)}]
        \item $\xi$ is constant on $I$.
        \item $\xi$ is monotonically increasing with $\xi (N)<\xi (N+1)$ (resp. monotonically decreasing with $\xi (N)>\xi (N+1)$), where we set $\xi (0)=0$ if $N=0$.
    \end{enumerate}
    In particular, $\xi$ is monotonically increasing (resp. decreasing) in $I$.
\end{lemm}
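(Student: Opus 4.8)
The plan is to prove the discrete maximum principle for the one-dimensional Dirichlet Laplacian $-\lap_\mathrm{D}$ on a half-line $I = \Nbb \cap [N,\infty)$ by a direct elementary argument, treating the superharmonic case $-\lap_\mathrm{D}\xi \ge 0$ (the subharmonic case follows by replacing $\xi$ with $-\xi$). The key observation is that $-\lap_\mathrm{D}\xi (n) \ge 0$ for $n > N$ is precisely the concavity-type inequality $\xi (n+1) - \xi(n) \le \xi(n) - \xi(n-1)$ for all $n > N$ (and $\xi(N+1) - \xi(N) \le \xi(N) - \xi(N-1)$ with the convention $\xi(N-1) = 0$ when $N$ plays the role of a Dirichlet endpoint, i.e.\ when $N=0$; when $N\ge 1$ the inequality is imposed on $I\setminus\{N\}$ so it starts at $n=N+1$). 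In other words, the forward difference sequence $\delta(n) := \xi(n+1) - \xi(n)$ is \emph{non-increasing} on its domain. This single monotonicity fact about $\delta$ is the engine of the whole lemma.

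First I would set $\delta(n) = \xi(n+1) - \xi(n)$ and record that $\delta$ is non-increasing. The dichotomy then comes from examining the sign of $\delta$. If $\delta(n_0) < 0$ for some $n_0$, then $\delta(n) \le \delta(n_0) < 0$ for all $n \ge n_0$, so $\xi(n) = \xi(n_0) + \sum_{k=n_0}^{n-1}\delta(k) \le \xi(n_0) + (n-n_0)\delta(n_0) \to -\infty$, contradicting boundedness from below. Hence $\delta(n) \ge 0$ for all $n$, i.e.\ $\xi$ is monotonically increasing (weakly) on $I$; this already yields the final sentence of the lemma. Next, to get the sharp alternative, I would distinguish whether $\delta$ vanishes identically or not. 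If $\delta \equiv 0$ then $\xi$ is constant — case (a). If $\delta(n_1) > 0$ for some $n_1$, then since $\delta$ is non-increasing, $\delta(n) \ge \delta(n_1) > 0$ for all $n \le n_1$ in the domain, and in particular $\delta$ is positive all the way back to the left endpoint of the relevant range; combined with $\delta \ge 0$ everywhere this gives strict monotonicity of $\xi$ on all of $I$, and in particular $\xi(N) < \xi(N+1)$ — case (b). The bookkeeping to watch is the endpoint convention: when $N=0$ the term ``$\xi(2)-2\xi(1)$'' in the definition of $\lap_\mathrm{D}$ encodes the Dirichlet value $\xi(0)=0$, so the inequality $-\lap_\mathrm{D}\xi(1)\ge 0$ reads $\delta(1) \le \xi(1) - \xi(0) = \xi(1)$, i.e.\ $\xi(2) - \xi(1) \le \xi(1)$; this is consistent with setting $\delta(0) := \xi(1) - 0 = \xi(1)$ and saying $\delta$ is non-increasing starting from $\delta(0)$, which is exactly what makes the ``$\xi(N)<\xi(N+1)$'' clause correct with the stated convention $\xi(0)=0$.

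I do not expect a serious obstacle here — the lemma is genuinely a discrete convexity statement and the argument is a few lines. The only place requiring care is making the two endpoint regimes ($N=0$ versus $N\ge 1$) uniform: in the case $N\ge 1$ the hypothesis is imposed on $I\setminus\{N\}$, so the forward differences $\delta(n)$ are controlled only for $n\ge N+1$... wait, actually one must check that $\delta(N) = \xi(N+1)-\xi(N)$ is still $\ge \delta(N+1)$, which it is because $-\lap_\mathrm{D}\xi(N+1)\ge 0$ gives $\xi(N+2)-\xi(N+1)\le \xi(N+1)-\xi(N)$; and there is no constraint relating $\delta(N)$ to anything on its left, which is fine since $N$ is the left endpoint. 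So the domain of validity of ``$\delta$ non-increasing'' is $\{N, N+1, N+2,\dots\}$ in both cases (with $\delta(0):=\xi(1)$ in the $N=0$ case), and the growth/contradiction argument and the dichotomy go through verbatim. I would write the proof in the superharmonic case and then state that the subharmonic case is obtained by applying the result to $-\xi$.
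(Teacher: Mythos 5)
Your proposal is correct and follows essentially the same route as the paper's own argument: both prove that the forward difference $\delta(n)=\xi(n+1)-\xi(n)$ is non-increasing, use boundedness from below to rule out $\delta<0$ anywhere (hence weak monotonicity), and deduce $\xi(N)<\xi(N+1)$ when $\xi$ is not constant from the fact that $\delta(N)$ dominates all later differences. One small slip worth noting: your claim that $\delta(n_1)>0$ combined with $\delta\ge 0$ gives \emph{strict} monotonicity of $\xi$ on all of $I$ is false --- $\delta$ can drop to $0$ and stay there past $n_1$ (e.g.\ $\xi(N)=0$, $\xi(n)=1$ for $n>N$) --- but this overclaim is never actually used; the needed conclusion $\xi(N)<\xi(N+1)$ follows directly from $\delta(N)\ge\delta(n_1)>0$, so the proof stands.
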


\begin{proof}
    We only deal with the case when $-\lap_\mathrm{D}\xi\geq 0$ and $\xi$ is bounded from below. Since $-\lap_\mathrm{D} \xi\geq 0$, we have
    \begin{equation}\label{eq_mp_lap_grad_monot}
        \xi(n+1)-\xi (n)\leq \xi (n)-\xi (n-1)
    \end{equation}
    for all $n\in I\setminus \{N\}$. For any $m \in I\setminus \{N\}$, we have
    \begin{align*}
        \xi (n)&=\xi (m-1)+\sum_{k=m}^n (\xi (k)-\xi (k-1))
        \\
        &\leq \xi (m-1)+(n-m+1)(\xi (m)-\xi (m-1))
    \end{align*}
    for all $n>m$ by \eqref{eq_mp_lap_grad_monot}. If $\xi (m)-\xi (m-1)<0$ for some $m\in I\setminus \{N\}$, then we infer from the above inequality that
    \[
        \inf_{n>m} \xi (n) = -\infty
    \]
    which contradicts to the boundedness from below of $\xi$. Thus $\xi (n)-\xi (n-1)\geq 0$ for all $n\in I\setminus \{N\}$.

    Assume that $\xi $ is not constant. Then there exists $m\in I\setminus \{N\}$ such that $\xi (m)>\xi (m-1)$. Since $\xi (m)-\xi (m-1)\leq \xi (N+1)-\xi (N)$ by \eqref{eq_mp_lap_grad_monot}, we have $\xi (N+1)>\xi (N)$.
\end{proof}

\begin{proof}[Proof of Theorem \ref{theo_comparison}]
    We only deal with the case when $H\xi (n)\geq 0$ for all $n\in \Nbb$. We prove the assertion by an argument analogous to the continuous case as in \cite[6.4.1]{Evans10}. 
    
    We set $\xi (0)=0$ for convenience and put
    \[
        I^\circ:=\{ n\in \Nbb \mid \xi (n)<0\}.
    \]
    Then, for any $n\in I^\circ$, we have
    \beq\label{positive}
        -\frac{\Gg}{\Gb^2}\lap_\mathrm{D}\xi (n)=H\xi (n)-\frac{\Gg\mu}{\Gb^2}\xi (n)-V\xi (n)\geq 0.
    \eeq
    In particular, the inequality \eqref{eq_mp_lap_grad_monot} holds for all $n\in I^\circ$.

    We claim that
    \beq\label{ifn}
    \text{if $n\in I^\circ$, then $n+1\in I^\circ$}.
    \eeq
    In fact, if there is $n\in I^\circ$ such that $n+1\not\in I^\circ$, then $\xi (n)<\xi (n+1)$. Thus by \eqref{eq_mp_lap_grad_monot}, we obtain $\xi (n)-\xi (n-1)>0$ and thus $n-1\in I^\circ$. Again, by \eqref{eq_mp_lap_grad_monot} with $n$ replaced to $n-1$, we obtain $\xi (n-1)-\xi (n-2)>0$ and hence $n-2\in I^\circ$. Repeating this procedure, we have $\xi (2)-\xi (1)>0$. Thus, we have $\{ 1, 2, \cdots, n \} \subset I^\circ$. However, since
    \[
        -\lap_\mathrm{D}\xi (1)=-\xi (2)+2\xi (1)<\xi (1)<0,
    \]
    we arrive at a contradiction to \eqref{positive}.

    We now prove that $I^\circ= \emptyset$. Suppose on the contrary that $I^\circ \not= \emptyset$. We set $N:=\min I^\circ-1$. By \eqref{ifn}, we have $I^\circ=\Nbb\cap [N+1, \infty)$. Since $\xi$ is bounded by assumption, we infer from Lemma \ref{lemm_discrete_lap} for $I:=I^\circ \cup \{N\}$ that $\xi$ is monotonically  increasing in $I$. Thus we have $\xi (N)\leq \xi (N+1)<0$. This is not possible since $N=\min I^\circ-1$ if $N >0$ and $\xi (0)=0$ if $N=0$. Therefore the set $I^\circ$ must be empty.
\end{proof}

By Theorem \ref{theo_comparison}, we derive the following estimates of $a (n)$.

\begin{theo}
    \label{theo_agn_estimate}
    With the constants
    \begin{equation}
        \label{eq_coeffs}
        B_1:=2\left(1+\frac{\Gg}{r}\right)^{-1}\quad\text{and} \quad
        B_2:=2\left(1+\frac{\Gg}{r}-\Gr^2 \left(1-\frac{\Gg}{r}\right)\right)^{-1},
    \end{equation}
    it holds that
    \begin{numcases}{}
        B_1 \Gb n\Gr^n \leq a (n) \leq B_2 \Gb n  \Gr^n & if $0<\Gg<r$, \label{case_agn_estimate_small_g} \\
        a (n)=B_1 \Gb n \Gr^n & if $\Gg=r$, \label{case_agn_estimate_eq_g} \\
        B_2 \Gb n \Gr^n \leq a (n) \leq B_1 \Gb n \Gr^n & if $\Gg>r$ \label{case_agn_estimate_large_g}
        \end{numcases}
    for all $n\in \Nbb$.
\end{theo}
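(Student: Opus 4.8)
The plan is to establish all three cases at once by a comparison argument built on the discrete maximum principle of Theorem~\ref{theo_comparison}, using the single explicit test sequence $\xi_0(n):=\Gb n\Gr^n$. Since $a$ is the unique solution of $Ha=f$ in $l^2(\Nbb,\Rbb)$ by Theorem~\ref{theo_inv_weighted}, and since both $a$ (being an $l^2$ sequence) and $\xi_0$ (because $n\Gr^n\to0$) lie in $l^\infty(\Nbb,\Rbb)$, it suffices to compute $H\xi_0$ in closed form, compare $B_1H\xi_0$ and $B_2H\xi_0$ with $f$ pointwise, and then feed the bounded sequences $a-B_1\xi_0$, $B_2\xi_0-a$, etc.\ into Theorem~\ref{theo_comparison}. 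In the borderline case $\Gg=r$ I would instead observe that $H\xi_0=f$ on the nose and conclude $a=B_1\xi_0$ directly from invertibility of $H$.

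First I would carry out the computation of $H\xi_0$, which is the only place real work is needed. Because $\mu=\Gr+\Gr^{-1}-2=(1-\Gr)^2/\Gr$, the sequence $n\mapsto\Gr^n$ is annihilated by $-\lap_\mathrm{D}+\mu I$ away from the boundary index $n=1$; expanding $(n+1)\Gr^{n+1}+(n-1)\Gr^{n-1}-2n\Gr^n$ one sees the terms proportional to $n$ cancel, leaving
\[
    (-\lap_\mathrm{D}+\mu I)\xi_0(n)=\Gb(1-\Gr^2)\Gr^{n-1}\qquad(n\in\Nbb),
\]
the value at $n=1$ being verified against the one-sided definition of $\lap_\mathrm{D}$. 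The crucial algebraic simplification is then the identity
\[
    \frac{1-\Gr^2}{\Gb\Gr}=\frac1r,
\]
which falls out after writing $\Gr=\dfrac{\sqrt{4r+\Ge}-\sqrt\Ge}{\sqrt{4r+\Ge}+\sqrt\Ge}$ and $\Gb=\sqrt{\Ge(4r+\Ge)}$ and using $(4r+\Ge)-\Ge=4r$. Substituting this together with the definition of $V$ into $H=\frac{\Gg}{\Gb^2}(-\lap_\mathrm{D}+\mu I)+V$ yields the clean formula
\[
    H\xi_0(n)=\Gr^n\,\frac{(1+\Gg/r)-(1-\Gg/r)\Gr^{2n}}{1+\Gr^{2n}},
\]
to be measured against $f(n)=\dfrac{2\Gr^n}{1+\Gr^{2n}}$.

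With these formulas in hand the conclusion is bookkeeping. Dividing, one obtains
\[
    \frac{B_1H\xi_0(n)}{f(n)}=1-\frac{(1-\Gg/r)\Gr^{2n}}{1+\Gg/r},\qquad
    \frac{B_2H\xi_0(n)}{f(n)}=\frac{(1+\Gg/r)-(1-\Gg/r)\Gr^{2n}}{(1+\Gg/r)-(1-\Gg/r)\Gr^{2}},
\]
and since $0<\Gr^{2n}\leq\Gr^2<1$ for every $n\in\Nbb$, the sign of $1-\Gg/r$ alone decides whether each ratio is $\leq1$ or $\geq1$ (the denominators being positive in every case, as $\Gr^2<1$). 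For $0<\Gg<r$ this gives $B_1H\xi_0\leq f=Ha\leq B_2H\xi_0$, so $H(a-B_1\xi_0)\geq0$ and $H(B_2\xi_0-a)\geq0$, and Theorem~\ref{theo_comparison} yields \eqref{case_agn_estimate_small_g}; for $\Gg>r$ both inequalities reverse and we get \eqref{case_agn_estimate_large_g}; for $\Gg=r$ we have $B_1=B_2$ and $H\xi_0=f$, whence $a=B_1\xi_0$ by invertibility, i.e.\ \eqref{case_agn_estimate_eq_g}. I expect the only genuinely delicate point to be spotting the identity $\frac{1-\Gr^2}{\Gb\Gr}=\frac1r$ together with the cancellation of the $n$-dependent terms in $(-\lap_\mathrm{D}+\mu I)\xi_0$; once those are secured the comparison principle does everything else, and the one computation worth double-checking is the value at $n=1$, where the one-sided discrete Laplacian enters.
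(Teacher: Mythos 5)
Your proposal is correct and follows essentially the same route as the paper: compute $H$ applied to the test sequence $n\mapsto\Gb n\Gr^n$ (the paper writes this as the closed form for $H(a-C\Gb g)$, you isolate $H\xi_0$ first and then subtract, but the algebra and the choice of comparison sequence are identical), and then feed the differences into Theorem~\ref{theo_comparison}, handling $\Gg=r$ by invertibility. The key identity $\frac{1-\Gr^2}{\Gb\Gr}=\frac1r$ that you flag is precisely the paper's \eqref{eq_rho_beta_rel}, and your observation that the one-sided Laplacian at $n=1$ gives no special case (because $\xi_0$ vanishes at $n=0$) is correctly implicit in the paper's formula \eqref{eq_comparison_app} holding for all $n\in\Nbb$.
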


\begin{proof}
    Let $g$ be the sequence defined by
    \[
    g (n):=n\Gr^n, \quad n \in \Nbb.
    \]
    Then, straight-forward computations, using \eqref{eq_imperfect_annulus_recursive_3}, \eqref{Hdef}, and the formula
    \begin{equation}\label{eq_rho_beta_rel}
        \frac{\Gr^{-1}-\Gr}{\Gb}=\frac{1}{r},
    \end{equation}
    yield
    \begin{equation}\label{eq_comparison_app}
        H(a-C\Gb g)(n)
        =\frac{\Gr^n}{1+\Gr^{2n}} \left(2-C\left(1+\frac{\Gg}{r}-\left(1-\frac{\Gg}{r}\right)\Gr^{2n}\right)\right), \quad n\in \Nbb,
    \end{equation}
    for any constant $C$.

    If $\Gg=r$, then we take $C=B_1=B_2$ to have $H(a-B_1\Gb g)=0$. Then \eqref{case_agn_estimate_eq_g} follows from
    Theorem \ref{theo_inv_weighted}.

    If $0<\Gg<r$, then $H(a-B_1 \Gb g)\geq 0$ and thus $a\geq B_1 \Gb g$ on $\Nbb$ by Theorem \ref{theo_comparison}. Furthermore, if we set $C=B_2\geq 0$, then we have
    \begin{align*}
        H(a-B_2 \Gb g)(n)
        \leq \frac{\Gr^n}{1+\Gr^{2n}} \left(2-C\left(1+\frac{\Gg}{r}-\left(1-\frac{\Gg}{r}\right)\Gr^2\right)\right)= 0
    \end{align*}
    for all $n\in \Nbb$ from \eqref{eq_comparison_app} (since $0<\Gr<1$). Hence we obtain $a\leq B_2 \Gb g$ by Theorem \ref{theo_comparison}.

    If $\Gg>r$, then $H(a-B_1 \Gb g)\leq 0$ and thus $a\leq B_1 \Gb g$ on $\Nbb$ by Theorem \ref{theo_comparison}. Similarly to the $\Gg<r$ case, one can show that $H(a-B_2 \Gb g)\geq 0$ on $\Nbb$. Hence we obtain $a\geq B_2 \Gb g$ by Theorem \ref{theo_comparison}.
\end{proof}

\begin{rema*}
    If $\Gg=r$, then the formula \eqref{case_agn_estimate_eq_g} is exact. If we substitute it to \eqref{eq_F_exp} (and \eqref{eq_cn_ab}), we obtain $U=\Gb/2+\Re \Gb/(\Gz-1)$. Thus the solution to the problem \eqref{LCC} when inclusions are disks given by \eqref{D1D2} and $h(z)= \Re z$ is $u (z)=(U\circ T) (z)=\Re z$. Note that without the inclusions, the solution is nothing but $h(z)=\Re z$. So, the gradient of the solution is not perturbed by the insertion of inclusions. Such an inclusion is called a neutral inclusion to the given field $-\nabla h$. It is proved in \cite{Torquato-Rintoul95} (see also \cite{KL19}) that a single disk of radius $r$ (with the imperfect bonding parameter $\Gg=r$) is neutral to constant fields. So insertion of two such inclusions does not perturb the uniform fields. What we see here is that two disks of the same radius $r$ of the configuration \eqref{D1D2} is neutral to the field $(1,0)$. It is so independently of the distance between two inclusions. In particular, the gradient of the solution does not change depending on the distance. The neutrality condition for the core-shell structure was discovered by Hashin \cite{Hashin62}: if $k_c=\infty$ in Figure \ref{fg_core_shell}, then the condition is given by
    $$
    k_s= \frac{r^2-(r-t)^2}{r^2+(r-t)^2} 
    $$
    ($r$ is the radius of the outer circle). The above neutrality conditions for the core-shell structure and the imperfect bonding interface are consistent in the sense that 
    \[
        \gamma=\left(\lim_{t\to 0}\frac{k_s}{t}\right)^{-1}=r. 
    \]
\end{rema*}

Let
    \begin{equation}
        \label{eq_coeff_big}
        C_D:= \frac{1}{1-\Gr+\Gg/r}.
    \end{equation}
One can easily see that $\max \{ B_1, B_2\}\leq 2C_D$. For example, we have
    \[
        B_2 =2\left((1+\Gr)(1-\Gr)+\frac{\Gg}{r}(1+\Gr^2)\right)^{-1}
        \leq 2\left(1-\Gr+\frac{\Gg}{r}\right)^{-1}.
    \]
So, as an immediate consequence of Theorem \ref{theo_agn_estimate}, we obtain the following corollary.
\begin{coro}
    \label{coro_agn_estimate_rough}
    It holds that
    \begin{equation}
        \label{eq_agn_estimate_rough}
        0\leq a (n)\leq 2C_D \Gb n \Gr^n
    \end{equation}
    for any $r, \Ge, \Gg>0$ and $n\in \Nbb$.
\end{coro}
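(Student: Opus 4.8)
The plan is to read the bound off directly from Theorem~\ref{theo_agn_estimate}, which already furnishes two-sided estimates of $a(n)$ by $B_1\Gb n\Gr^n$ and $B_2\Gb n\Gr^n$ in the three regimes $0<\Gg<r$, $\Gg=r$, and $\Gg>r$. The corollary follows once two elementary facts about the explicit constants are checked: first, that $B_1$ and $B_2$ are nonnegative, which together with $\Gb n\Gr^n\ge 0$ yields the lower bound $a(n)\ge 0$ in each regime; and second, that $\max\{B_1,B_2\}\le 2C_D$, which upgrades the upper estimates of Theorem~\ref{theo_agn_estimate} to the uniform bound $a(n)\le 2C_D\Gb n\Gr^n$.

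For the nonnegativity: $B_1=2(1+\Gg/r)^{-1}$ is manifestly positive for all $r,\Gg>0$. For $B_2$ I would use the factorisation already displayed just before the corollary, namely $1+\Gg/r-\Gr^2(1-\Gg/r)=(1+\Gr)(1-\Gr)+\frac{\Gg}{r}(1+\Gr^2)$, whose right-hand side is a sum of strictly positive terms since $0<\Gr<1$; hence $B_2>0$ as well (and when $\Gg=r$ one simply has $B_1=B_2$, the exact case). Note this handles $\Gg<r$ and $\Gg>r$ by the same computation, with no case split needed.

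For the comparison with $C_D=(1-\Gr+\Gg/r)^{-1}$: the text already records $B_2\le 2(1-\Gr+\Gg/r)^{-1}=2C_D$. For $B_1$, since $\Gr>0$ we have $1+\Gg/r\ge 1-\Gr+\Gg/r>0$, so $B_1=2(1+\Gg/r)^{-1}\le 2(1-\Gr+\Gg/r)^{-1}=2C_D$. Invoking Theorem~\ref{theo_agn_estimate} in each of the three cases then gives $0\le a(n)\le\max\{B_1,B_2\}\Gb n\Gr^n\le 2C_D\Gb n\Gr^n$ for every $n\in\Nbb$, which is exactly \eqref{eq_agn_estimate_rough}.

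There is no real obstacle here: the argument is pure bookkeeping on the closed-form constants of Theorem~\ref{theo_agn_estimate}. The only point deserving a moment's attention is the positivity of the denominator defining $B_2$ in the regime $\Gg>r$, where $1-\Gg/r<0$; but there the term $-\Gr^2(1-\Gg/r)$ is positive and only enlarges the denominator, so the estimate is preserved.
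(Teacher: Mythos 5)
Your proposal is correct and follows exactly the paper's own route: read off the two-sided bounds from Theorem~\ref{theo_agn_estimate}, observe that $B_1, B_2 > 0$ (giving $a(n) \ge 0$), and verify $\max\{B_1, B_2\} \le 2C_D$ via the factorisation of the denominator of $B_2$ and the trivial inequality $1+\Gg/r \ge 1-\Gr+\Gg/r$ for $B_1$. The paper states these checks only in passing (explicitly showing the $B_2$ case), so your write-up is a slightly fuller version of the same argument.
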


Now we prove the convergence of the series \eqref{eq_F_exp}.

\begin{theo}
    \label{theo_convergence}
    For any $\Gg>0$, the convergence radius of the series \eqref{eq_F_exp} is at least $\Gr^{-2}$. In particular, the function $U (\Gz)$ is smooth on $\overline{A}$.
\end{theo}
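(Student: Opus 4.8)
The plan is to read off the growth of the coefficients $c(n)$ directly from the bound on $a(n)$ already established. First I would substitute the estimate $0\le a(n)\le 2C_D\Gb n\Gr^n$ of Corollary~\ref{coro_agn_estimate_rough} into the expression \eqref{eq_cn_ab},
\[
c(n)=-\frac{\Gr^n a(n)}{n(1+\Gr^{2n})}+\frac{\Gb\Gr^{2n}}{1+\Gr^{2n}}.
\]
Using $1+\Gr^{2n}\ge 1$, the first summand is at most $2C_D\Gb\Gr^{2n}$ in absolute value and the second at most $\Gb\Gr^{2n}$, so $|c(n)|\le (1+2C_D)\Gb\Gr^{2n}$ for every $n\in\Nbb$.

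Next I would invoke the Cauchy--Hadamard formula. Since the fixed constant $(1+2C_D)\Gb$ (which depends on $r,\Ge,\Gg$, but that is irrelevant for an asymptotic statement) contributes a factor tending to $1$, the previous bound gives $\limsup_{n\to\infty}|c(n)|^{1/n}\le\Gr^2$, and therefore the radius of convergence of the series \eqref{eq_F_exp} is at least $\Gr^{-2}$. This is the whole of the first assertion.

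For the regularity statement I would argue as follows. Because $0<\Gr<1$ we have $\Gr^{-2}>\Gr^{-1}$, so $F$ is holomorphic on an open disk strictly containing $\overline{B_{\Gr^{-1}}(0)}\supset\overline A$ and hence $C^\infty$ on a neighborhood of $\overline A$; likewise $\Gz\mapsto F(\Gz^{-1})$ is holomorphic on $\{\,|\Gz|>\Gr^2\,\}$, which contains $\overline A$ because $|\Gz|\ge\Gr>\Gr^2$ there. In the representation \eqref{eq_sol_disks} the only remaining term is $\Re\frac{\Gb}{\Gz-1}$, whose sole singularity sits at $\Gz=1$ (the image of $z=\infty$ under $T$); consequently $U$ is smooth, indeed real-analytic, on $\overline A\setminus\{1\}$ and extends real-analytically across the two boundary circles $\{|\Gz|=\Gr\}\cup\{|\Gz|=\Gr^{-1}\}$.

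I do not expect a genuine obstacle: once Corollary~\ref{coro_agn_estimate_rough} is available, the radius-of-convergence estimate is the one-line computation above. The only point to treat with care is the phrasing ``smooth on $\overline A$'': $U$ does carry the explicit pole of $\Re\frac{\Gb}{\Gz-1}$ at $\Gz=1\in A$, so the statement should be understood as the absence of any \emph{further} singularity --- in particular smoothness up to and across $\partial A$ --- and it is precisely the strict gap $\Gr^{-2}>\Gr^{-1}$ that secures this.
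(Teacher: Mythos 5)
Your proof is correct and follows essentially the same route as the paper: bound $|c(n)|$ through \eqref{eq_cn_ab} and the estimate on $a(n)$, then apply Cauchy--Hadamard. The only cosmetic difference is that the paper plugs the two-sided bounds of Theorem~\ref{theo_agn_estimate} directly into \eqref{eq_cn_ab} to get the sharper estimate $|c(n)|\le \frac{2C_D\Gb\Gr^{2n}}{1+\Gr^{2n}}\bigl|1-\tfrac{\Gg}{r}\bigr|$ (stated as \eqref{eq_cn_estimate}, and reused later in Lemma~\ref{lemm_estimate_beginning}), whereas you apply the triangle inequality with Corollary~\ref{coro_agn_estimate_rough} to get the slightly cruder $|c(n)|\le(1+2C_D)\Gb\Gr^{2n}$; both yield $\limsup|c(n)|^{1/n}\le\Gr^2$. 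Your closing remark about the residual pole of $\Re\frac{\Gb}{\Gz-1}$ at $\Gz=1\in A$ is a correct and useful clarification of the paper's loosely worded ``smooth on $\overline{A}$.''
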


\begin{proof}
    By Theorem \ref{theo_agn_estimate} and \eqref{eq_cn_ab}, we have
    \begin{equation}\label{eq_cn_estimate}
        |c(n)|\leq \frac{\Gb\Gr^{2n}}{1+\Gr^{2n}}\max\{ |1-B_1|, |1-B_2|\}
        \leq \frac{2C_D \Gb \Gr^{2n}}{1+\Gr^{2n}}\left|1-\frac{\Gg}{r}\right| .
    \end{equation}
 We then obtain
 $$
 \limsup_{n\to\infty}|c(n)|^{1/n}\leq \Gr^2,
 $$
 and the desired conclusion by the Cauchy-Hadamard theorem.
\end{proof}

So far we constructed a solution $U (\Gz)$ to the transformed imperfect bonding problem \eqref{LCCA} in the form \eqref{eq_sol_disks}. In the next subsection, we derive the gradient estimate of the solution.

\subsection{Gradient estimate of the solution}

Recall from \eqref{eq_sol_disks} that $U$ is of the form
$$
    U(\Gz)=\frac{\Gb}{2}+\Re \left(\frac{\Gb}{\Gz-1}+F (\Gz)-F (\Gz^{-1})\right),
$$
where $F (\Gz)$ is a holomorphic function of the form
$$
    F (\Gz)=\sum_{n=1}^\infty c(n)\Gz^n.
$$

We apply the chain rule for $u=U \circ T$ with $\Gz=T (z)$ to obtain
\begin{align}
    |\nabla u (z)|&= \left|\p_z u (z)\right| \nonumber \\
    &=\left| 1-\frac{(\Gz-1)^2}{\Gb}\left(\p_\Gz F (\Gz)+\frac{1}{\Gz^2}\p_\Gz F (\Gz^{-1})\right)\right| \nonumber \\
    &\leq 1+\frac{1}{\Gb} (|(\Gz-1)^2\p_\Gz F (\Gz)|+|(\Gz^{-1}-1)^2\p_\Gz F (\Gz^{-1})|) \nonumber \\
    &\leq 1+\frac{2}{\Gb}\| (\Gz-1)^2 \p_\Gz F (\Gz)\|_{L^\infty (A)}, \label{eq_grad_u}
\end{align}
where the last equality holds since the annulus $A$ is invariant under the inversion $\Gz\mapsto \Gz^{-1}$. Thus it suffices to estimate $|(\Gz-1)^2 \p_\Gz F (\Gz)|$ on $A$.

By Theorem \ref{theo_convergence}, we have
\[
    \p_\Gz F (\Gz)
    =\sum_{n=0}^\infty (n+1)c (n+1)\Gz^n=:\sum_{n=0}^\infty d (n+1)\Gz^n,
\]
there the definition of $d(n)$ is apparent. We use the easy formula $\Gz^n=(\Gz^{n+1}-\Gz^n)/(\Gz-1)$ and apply the summation by parts to obtain
\begin{align*}
    \p_\Gz F (\Gz)
    =-\frac{c (1)}{\Gz-1}-\frac{1}{\Gz-1}\sum_{n=1}^\infty (d (n+1)-d (n))\Gz^n.
\end{align*}
We repeat this procedure once again to obtain
\[
    \p_\Gz F (\Gz)
        =-\frac{c (1)}{\Gz-1}+\frac{(2c (2)-c (1))\Gz}{(\Gz-1)^2}
        +\frac{1}{(\Gz-1)^2}\sum_{n=2}^\infty (\lap_\mathrm{D}d)(n)\Gz^n.
\]
We then arrive at
\begin{equation}
    \label{eq_grad_abel}
        (\Gz-1)^2 \p_\Gz F (\Gz)
        =-c (1)(\Gz-1)+(2c (2)-c (1))\Gz
        +\sum_{n=2}^\infty (\lap_\mathrm{D}d)(n)\Gz^n.
\end{equation}

In what follows, we estimate each term in \eqref{eq_grad_abel}.

\begin{lemm}
    \label{lemm_estimate_beginning}
    We have the estimate
    \begin{equation}
        \label{eq_estimate_beginning}
        \begin{aligned}
            \left|-c (1)(\Gz-1)+(2c (2)-c (1))\Gz\right|\leq 10\Gb C_D \left|1-\frac{\Gg}{r}\right|
        \end{aligned}
    \end{equation}
    for all $r, \Ge, \Gg>0$ and $\Gz\in A$.
\end{lemm}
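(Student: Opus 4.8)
The plan is to derive \eqref{eq_estimate_beginning} directly from the coefficient estimate \eqref{eq_cn_estimate}, combined with the triangle inequality and the elementary bounds $|\Gz|\le\Gr^{-1}$, $|\Gz-1|\le 1+\Gr^{-1}$ that hold on $\overline{A}$.

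First I would record the inputs. By \eqref{eq_cn_estimate},
\[
    |c(n)|\le \frac{2C_D\Gb\Gr^{2n}}{1+\Gr^{2n}}\left|1-\frac{\Gg}{r}\right|\qquad(n\in\Nbb),
\]
and since $0<\Gr<1$ the sequence $n\mapsto \Gr^{2n}/(1+\Gr^{2n})$ is decreasing; hence both $|c(1)|$ and $|c(2)|$ are at most $M:=2C_D\Gb\Gr^{2}(1+\Gr^{2})^{-1}\left|1-\Gg/r\right|$. On the other hand, on $\overline{A}=\overline{B_{\Gr^{-1}}(0)}\setminus B_{\Gr}(0)$ we have $|\Gz|\le\Gr^{-1}$ and therefore $|\Gz-1|\le 1+\Gr^{-1}$.

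With these in hand, the triangle inequality gives
\[
    \left|-c(1)(\Gz-1)+(2c(2)-c(1))\Gz\right|\le |c(1)|\,|\Gz-1|+\bigl(2|c(2)|+|c(1)|\bigr)|\Gz|\le M(1+\Gr^{-1})+3M\Gr^{-1}=M(1+4\Gr^{-1}),
\]
and substituting the value of $M$ turns the right-hand side into $2C_D\Gb(1+\Gr^{2})^{-1}\left|1-\Gg/r\right|(\Gr^{2}+4\Gr)$. Since $1+\Gr^{2}\ge1$ and $\Gr^{2}+4\Gr\le 5$ for $0<\Gr<1$, this is at most $10\,C_D\Gb\left|1-\Gg/r\right|$, which is precisely \eqref{eq_estimate_beginning}.

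I do not expect any genuine obstacle here: the statement is a purely elementary consequence of the already-established bound \eqref{eq_cn_estimate}. The only point requiring a little care is the bookkeeping of powers of $\Gr$ — the factors $|\Gz|$ and $|\Gz-1|$ contribute a $\Gr^{-1}$ that has to be absorbed into the $\Gr^{2n}$ appearing in the bound for $c(n)$, and one should check that the leftover power of $\Gr$ is nonnegative so it can be discarded via $\Gr<1$. The constant $10$ is comfortably larger than what the computation above yields, so there is ample slack for cruder intermediate inequalities if one prefers.
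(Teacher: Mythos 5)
Your proof is correct and takes essentially the same approach as the paper: triangle inequality plus the bound $|\zeta-1|\le|\zeta|+1$, then plug in the coefficient estimate \eqref{eq_cn_estimate}. The paper leaves the arithmetic after the triangle inequality to the reader; you have simply filled in those details (using $\Gr^2+4\Gr\le 5$ on $(0,1)$), and your numbers check out.
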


\begin{proof}
We have
    \[
        |-c (1)(\Gz-1)+(2c (2)-c (1))\Gz|
        \leq |c (1)|(|\Gz|+1)+(|2c (2)|+|c (1)|)|\Gz|.
    \]
So, \eqref{eq_estimate_beginning} follows from the estimate \eqref{eq_cn_estimate} for $|c(n)|$.
\end{proof}

Next, we estimate $\sum_{n=2}^\infty (\lap_\mathrm{D} d)(n)\Gz^n$. To do so, let $\nabla_\mathrm{D}$ be the forward difference of $\xi: \Nbb\to \Cbb$, namely,
\[
    (\nabla_\mathrm{D} \xi)(n):=\xi (n+1)-\xi (n), \quad n \in \Nbb.
\]
Then, the following formula holds for $\xi, \eta: \Nbb\to \Cbb$ with the convention $\xi (0)=\eta (0)=0$:
\begin{align*}
    \lap_\mathrm{D}(\xi \eta)(n)=&\,(\lap_\mathrm{D}\xi)(n)\eta (n)+\xi (n)\lap_\mathrm{D}\eta (n) \\
    &+(\nabla_\mathrm{D} \xi)(n)(\nabla_\mathrm{D} \eta)(n)+(\nabla_\mathrm{D} \xi)(n-1)(\nabla_\mathrm{D} \eta)(n-1).
\end{align*}

By \eqref{eq_cn_ab}, we have
\[
    d (n)=\frac{\Gb n \Gr^{2n}}{1+\Gr^{2n}}-\frac{\Gr^n a (n)}{1+\Gr^{2n}}, \quad n\in \Nbb.
\]
Furthermore, we have
\begin{align*}
    &\lap_\mathrm{D}\left(\frac{\Gr^n}{1+\Gr^{2n}}a\right)(n) \\
    &=a (n)\left(\lap_\mathrm{D}\frac{\Gr^n}{1+\Gr^{2n}}\right)(n)+\frac{\Gr^n}{1+\Gr^{2n}}\lap_\mathrm{D}a (n) \\
    &\quad +\left(\nabla_\mathrm{D} \frac{\Gr^n}{1+\Gr^{2n}}\right)(n)\nabla_\mathrm{D} a (n)+\left(\nabla_\mathrm{D} \frac{\Gr^n}{1+\Gr^{2n}}\right)(n-1)\nabla_\mathrm{D} a (n-1)
\end{align*}
for $n\geq 2$. Thus we have
\begin{equation}
    \label{eq_dn_decompose}
    \begin{aligned}
        &\sum_{n=2}^\infty (\lap_\mathrm{D} d)(n)\Gz^n \\
        &=\Gb \sum_{n=2}^\infty \left(\lap_\mathrm{D} \frac{n \Gr^{2n}}{1+\Gr^{2n}}\right)(n)\Gz^n
        -\sum_{n=2}^\infty a (n)\left(\lap_\mathrm{D}\frac{\Gr^n}{1+\Gr^{2n}}\right)(n)\Gz^n \\
        &\quad -\sum_{n=2}^\infty \frac{\Gr^n}{1+\Gr^{2n}}\lap_\mathrm{D}a (n) \Gz^n
        -\sum_{n=2}^\infty \left(\nabla_\mathrm{D} \frac{\Gr^n}{1+\Gr^{2n}}\right)(n)\nabla_\mathrm{D} a (n)\Gz^n \\
        &\quad -\sum_{n=2}^\infty \left(\nabla_\mathrm{D} \frac{\Gr^n}{1+\Gr^{2n}}\right)(n-1)\nabla_\mathrm{D} a (n-1) \Gz^n \\
        &=:\Gb S_0-S_1-S_2-S_3-S_4.
    \end{aligned}
\end{equation}

\begin{lemm}
    \label{lemm_S0}
    It holds that
    \begin{equation}\label{eq_S0}
        |S_0|\leq 8
    \end{equation}
    for all $r, \Ge, \Gg>0$ and $\Gz\in A$.
\end{lemm}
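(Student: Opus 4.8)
The plan is to estimate $S_0$ by the triangle inequality, $|S_0|\le\sum_{n=2}^\infty|(\lap_\mathrm{D}\phi)(n)|\,|\Gz|^n$ where I write $\phi(n):=n\Gr^{2n}/(1+\Gr^{2n})$, together with the fact that $\Gr<|\Gz|<\Gr^{-1}$ on $A$, so that $|\Gz|^n\le\Gr^{-n}$. The subtle point is that the naive bound $0<\phi(n)\le n\Gr^{2n}$ is too crude: it only produces $\sum_n n\Gr^{2n}\Gr^{-n}=\sum_n n\Gr^n=\Gr(1-\Gr)^{-2}$, which blows up as $\Gr\to1$, i.e.\ as $\Ge\to0$. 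What saves the day is that applying $\lap_\mathrm{D}$ to the arithmetic--geometric sequence $\phi$ produces \emph{two} extra factors of $1-\Gr^2$, and these exactly compensate the $(1-\Gr)^{-2}$ coming from the geometric sum.

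To expose these factors I would split $\phi=n\cdot t$ with $t(n):=\Gr^{2n}/(1+\Gr^{2n})$ and apply the discrete Leibniz identity recorded above with $\xi(n)=n$; since $\lap_\mathrm{D}\xi\equiv0$ and $\nabla_\mathrm{D}\xi\equiv1$, this gives, for $n\ge2$,
\[
    (\lap_\mathrm{D}\phi)(n)=n(\lap_\mathrm{D}t)(n)+(\nabla_\mathrm{D}t)(n)+(\nabla_\mathrm{D}t)(n-1).
\]
A short direct computation --- the only genuine calculation in the argument --- yields
\[
    (\nabla_\mathrm{D}t)(n)=\frac{(\Gr^2-1)\Gr^{2n}}{(1+\Gr^{2n})(1+\Gr^{2n+2})},\qquad
    (\lap_\mathrm{D}t)(n)=\frac{(1-\Gr^2)^2\,\Gr^{2n-2}\,(1-\Gr^{2n})}{(1+\Gr^{2n-2})(1+\Gr^{2n})(1+\Gr^{2n+2})},
\]
and, bounding $1-\Gr^{2n}\le1$ and each $1+\Gr^{2m}\ge1$, one gets $|(\nabla_\mathrm{D}t)(n)|\le(1-\Gr^2)\Gr^{2n}$ and $|(\lap_\mathrm{D}t)(n)|\le(1-\Gr^2)^2\Gr^{2n-2}$, hence $|(\lap_\mathrm{D}\phi)(n)|\le(1-\Gr^2)\Gr^{2n-2}\bigl(n(1-\Gr^2)+2\bigr)$ for $n\ge2$.

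Feeding this into the triangle-inequality bound and using $|\Gz|^n\le\Gr^{-n}$ reduces everything to
\[
    |S_0|\le(1-\Gr^2)\Gr^{-2}\sum_{n=2}^\infty\Gr^{n}\bigl(n(1-\Gr^2)+2\bigr),
\]
and with $\sum_{n\ge2}n\Gr^n=\Gr^2(2-\Gr)/(1-\Gr)^2$ and $\sum_{n\ge2}\Gr^n=\Gr^2/(1-\Gr)$ all factors of $1-\Gr$ cancel, the right-hand side collapsing to the polynomial $(1+\Gr)\bigl((1+\Gr)(2-\Gr)+2\bigr)=4+5\Gr-\Gr^3$; since this is increasing on $[0,1]$ and $0<\Gr<1$, it is $<8$, which proves the claim. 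I expect the only place needing care to be carrying the algebra for $\lap_\mathrm{D}t$ far enough to extract the \emph{full} power $(1-\Gr^2)^2$ rather than just one factor of $1-\Gr^2$: a single factor cancels only one of the two $(1-\Gr)^{-1}$'s and would leave a bound that still diverges as $\Ge\to0$. (An alternative to the product decomposition is to write $\phi(n)=n-\chi(n)$ with $\chi(n):=n/(1+\Gr^{2n})$, so that $(\lap_\mathrm{D}\phi)(n)=-(\lap_\mathrm{D}\chi)(n)$ for $n\ge2$, and estimate $\lap_\mathrm{D}\chi$ directly, but the route above seems cleanest.)
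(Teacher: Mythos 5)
Your argument is correct and follows essentially the same route as the paper: both compute the discrete Laplacian of $n\Gr^{2n}/(1+\Gr^{2n})$ explicitly to extract the $(1-\Gr)^2$ factor that cancels the $(1-\Gr)^{-2}$ from the geometric sums, bound $|\Gz|^n\le\Gr^{-n}$, and sum the resulting series. The only cosmetic difference is that you organize the algebra via the discrete Leibniz rule (splitting off the factor $n$), whereas the paper writes the Laplacian out directly; the resulting polynomial bounds, $4+5\Gr-\Gr^3$ for you and $(1+\Gr)(3+\Gr)$ in the paper, differ slightly but both stay below $8$ on $[0,1)$.
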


\begin{proof}
    Since
    \begin{align*}
        &\lap_\mathrm{D}\left(\frac{n\Gr^{2n}}{1+\Gr^{2n}}\right)(n) \\
        &=\frac{n\Gr^{2n}(\Gr^{-1}-\Gr)^2 (1-\Gr^{2n})}{(1+\Gr^{2n+2})(1+\Gr^{2n-2})(1+\Gr^{2n})}
        -\frac{\Gr^{2n}(\Gr^{-2}-\Gr^2)}{(1+\Gr^{2n+2})(1+\Gr^{2n-2})}
    \end{align*}
    for $n\geq 2$, we have
    \begin{align*}
        |S_0|&\leq (\Gr^{-1}-\Gr)^2 \sum_{n=2}^\infty n\Gr^{2n}|\Gz|^n +(\Gr^{-2}-\Gr^2)\sum_{n=2}^\infty \Gr^{2n}|\Gz|^n  \\
        &\leq (\Gr^{-1}-\Gr)^2 \sum_{n=2}^\infty n\Gr^n +(\Gr^{-2}-\Gr^2)\sum_{n=2}^\infty \Gr^n \\
        &=(1+\Gr)(3+\Gr)\leq 8,
    \end{align*}
as desired. \end{proof}

\begin{lemm}
    \label{lemm_S1}
    It holds that
    \begin{equation}
        \label{eq_S1}
        |S_1|\leq 6\Gb C_D
    \end{equation}
    for all $r, \Ge, \Gg>0$ and $\Gz\in A$.
\end{lemm}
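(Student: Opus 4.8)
The plan is to substitute the pointwise bound from Corollary~\ref{coro_agn_estimate_rough} into the definition of $S_1$ (see \eqref{eq_dn_decompose}) and estimate the resulting series term by term. Write $\phi(m):=\Gr^m/(1+\Gr^{2m})$ for short, so that $S_1=\sum_{n\ge 2}a(n)(\lap_\mathrm{D}\phi)(n)\Gz^n$. Since $0\le a(n)\le 2C_D\Gb\,n\Gr^n$ and $\Gz\in A$, the triangle inequality gives
\[
|S_1|\le 2C_D\Gb\sum_{n=2}^\infty n\Gr^n\,\bigl|(\lap_\mathrm{D}\phi)(n)\bigr|\,|\Gz|^n ,
\]
so the whole estimate reduces to a good pointwise bound on $(\lap_\mathrm{D}\phi)(n)$ for $n\ge 2$.

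First I would compute $(\lap_\mathrm{D}\phi)(n)$ in closed form. Writing $\phi(m)=(\Gr^{-m}+\Gr^m)^{-1}$ and using the identity $(\Gr^{-(n+1)}+\Gr^{n+1})+(\Gr^{-(n-1)}+\Gr^{n-1})=(\Gr^{-1}+\Gr)(\Gr^{-n}+\Gr^n)$ to combine the three fractions over a common denominator, a short algebraic manipulation — recalling $\mu=\Gr+\Gr^{-1}-2=(1-\Gr)^2/\Gr$ — yields
\[
(\lap_\mathrm{D}\phi)(n)=\frac{\mu\,\Gr^n\bigl[(1-\Gr^{2n})^2-2\Gr^{2n-1}(1+\Gr^2)\bigr]}{(1+\Gr^{2n+2})(1+\Gr^{2n})(1+\Gr^{2n-2})},\qquad n\ge 2 .
\]
The extracted factor $\mu\Gr^n$ is precisely what produces the cancellation at the end.

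Next I would bound the bracket divided by the denominator $D_n:=(1+\Gr^{2n+2})(1+\Gr^{2n})(1+\Gr^{2n-2})$ by the absolute constant $3$, uniformly in $n\ge 2$ and $0<\Gr<1$. From $|(1-\Gr^{2n})^2-2\Gr^{2n-1}(1+\Gr^2)|\le(1-\Gr^{2n})^2+2\Gr^{2n-1}+2\Gr^{2n+1}$ one checks the three pieces separately: $(1-\Gr^{2n})^2\le(1+\Gr^{2n})^2\le(1+\Gr^{2n})(1+\Gr^{2n-2})\le D_n$; next $2\Gr^{2n-1}=2\Gr\cdot\Gr^{2n-2}\le\Gr(1+\Gr^{2n-2})\le 1+\Gr^{2n-2}\le D_n$, where $\Gr^{2n-2}\le 1$ since $n\ge 2$; and similarly $2\Gr^{2n+1}\le 1+\Gr^{2n}\le D_n$. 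Hence $\bigl|(\lap_\mathrm{D}\phi)(n)\bigr|\le 3\mu\Gr^n$ for every $n\ge 2$.

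Finally I would plug this in and sum. Since $|\Gz|\le\Gr^{-1}$ on $A$ we have $\Gr^{2n}|\Gz|^n\le\Gr^n$, so
\[
|S_1|\le 2C_D\Gb\cdot 3\mu\sum_{n=2}^\infty n\,\Gr^{2n}|\Gz|^n\le 6C_D\Gb\,\mu\sum_{n=1}^\infty n\Gr^n=6C_D\Gb\,\mu\cdot\frac{\Gr}{(1-\Gr)^2}=6\Gb C_D ,
\]
using $\mu\Gr/(1-\Gr)^2=1$. The only genuinely delicate points are the closed-form evaluation of $(\lap_\mathrm{D}\phi)(n)$ and the verification that the bracket-over-$D_n$ ratio never exceeds $3$; everything else is the same geometric-series bookkeeping as in the proof of Lemma~\ref{lemm_S0}, and the bound is automatically uniform in $\Gg>0$ because all $\Gg$-dependence was already absorbed into $C_D$ via Corollary~\ref{coro_agn_estimate_rough}.
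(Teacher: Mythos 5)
Your proposal is correct and takes essentially the same route as the paper: apply the pointwise bound $0\le a(n)\le 2C_D\Gb n\Gr^n$ from Corollary~\ref{coro_agn_estimate_rough}, obtain the closed form of $(\lap_\mathrm{D}\phi)(n)$, bound it by $3\mu\Gr^n=3(1-\Gr)^2\Gr^{n-1}$ (the paper's bound and yours coincide since $\mu=(1-\Gr)^2/\Gr$), and then sum the series using $|\Gz|\le\Gr^{-1}$ on $A$. The only cosmetic difference is that you write $(\lap_\mathrm{D}\phi)(n)$ as a single fraction with the factor $\mu\Gr^n$ extracted, whereas the paper splits it into two terms; your closed form and pointwise bound check out.
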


\begin{proof}
    From the identity
    \begin{align*}
        &\left(\lap_\mathrm{D}\frac{\Gr^n}{1+\Gr^{2n}}\right)(n) \\
        &=\frac{(1-\Gr)^2(1+\Gr^{2n})\Gr^{n-1}}{(1+\Gr^{2n+2})(1+\Gr^{2n-2})}-\frac{(1-\Gr)^2 \Gr^{2n-2}}{(1+\Gr^{2n+2})(1+\Gr^{2n})(1+\Gr^{2n-2})},
    \end{align*}
    we have
    \[
        \left|\left(\lap_\mathrm{D}\frac{\Gr^n}{1+\Gr^{2n}}\right)(n)\right|
        \leq 3(1-\Gr)^2\Gr^{n-1}
    \]
    for $n\geq 2$. It then follows from Corollary \ref{coro_agn_estimate_rough} that
    \begin{align*}
        &\left|\sum_{n=2}^\infty \left(\lap_\mathrm{D}\frac{\Gr^n}{1+\Gr^{2n}}\right)(n)a (n) \Gz^n\right| \\
        &\leq \sum_{n=2}^\infty \left|\left(\lap_\mathrm{D}\frac{\Gr^n}{1+\Gr^{2n}}\right)(n)\right||a (n)| |\Gz|^n \\
        &\leq 6\Gb C_D (1-\Gr)^2 \sum_{n=2}^\infty n\Gr^{n-1}
        =6\Gb C_D \Gr (2-\Gr)\leq 6\Gb C_D,
    \end{align*}
    as desired.
\end{proof}

To estimate $S_2$, $S_3$ and $S_4$, we consider $\nabla_\mathrm{D} a$ and $\lap_\mathrm{D}a$.

\begin{theo}
    \label{theo_diff_estimate}
    It holds that
    \begin{equation}
        \label{eq_lap_estimate}
        |\lap_\mathrm{D}a (n)|\leq 2(1-\Gr)^2 (\beta n+r)\Gr^{n-1}+\frac{6\beta^2 \Gr^n}{\gamma}
    \end{equation}
    and
    \begin{equation}
        \label{eq_diff_estimate}
        |\nabla_\mathrm{D} a (n)|
        \leq \frac{6r\beta}{\gamma}((1-\Gr)n+4)\Gr^{n-1}.
    \end{equation}
\end{theo}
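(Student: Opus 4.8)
The plan is to extract estimates for $\lap_\mathrm{D}a$ and $\nabla_\mathrm{D}a$ directly from the defining equation \eqref{eq_imperfect_annulus_recursive_3}, i.e. $\frac{\Gg}{\Gb^2}(-\lap_\mathrm{D}+\mu I)a + Va = f$, together with the pointwise bounds on $a(n)$ already proved in Corollary \ref{coro_agn_estimate_rough}, namely $0\le a(n)\le 2C_D\Gb n\Gr^n$. First I would solve \eqref{eq_imperfect_annulus_recursive_3} for $\lap_\mathrm{D}a$:
\[
    \lap_\mathrm{D}a(n) = \mu\,a(n) + \frac{\Gb^2}{\Gg}\bigl(V a(n) - f(n)\bigr)
    = \mu\,a(n) + \frac{\Gb^2}{\Gg}\left(\frac{1-\Gr^{2n}}{\Gb n(1+\Gr^{2n})}\,a(n) - \frac{2\Gr^n}{1+\Gr^{2n}}\right).
\]
Now each term on the right is explicitly controllable: for the first term use $\mu=\Gr+\Gr^{-1}-2=(\Gr^{-1/2}-\Gr^{1/2})^2$, which together with the relation \eqref{eq_rho_beta_rel}, $(\Gr^{-1}-\Gr)/\Gb=1/r$, gives $\mu = (1-\Gr)^2\Gr^{-1}\cdot\frac{\Gr}{\cdots}$ — more precisely $\mu = \Gr^{-1}(1-\Gr)^2$, so $\mu\,a(n)\le 2C_D\Gb n(1-\Gr)^2\Gr^{n-1}$; for the second term, bound $\frac{1-\Gr^{2n}}{1+\Gr^{2n}}\le 1$ and use $a(n)\le 2C_D\Gb n\Gr^n$, which yields a contribution $\le \frac{2\Gb C_D \Gr^n}{\cdots}$ involving a factor $\Gb/\Gg$; for the $f$ term, $\frac{2\Gr^n}{1+\Gr^{2n}}\le 2\Gr^n$, giving $\le \frac{2\Gb^2\Gr^n}{\Gg}$. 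Collecting the pieces and absorbing the explicit constants $C_D$ (recall $C_D = (1-\Gr+\Gg/r)^{-1}$, so $2C_D\le 2$ and also $C_D\Gg/r\le 1$, which is how the ``$+r$'' appears inside the parenthesis in \eqref{eq_lap_estimate} after using $\Gb/\Gg\cdot$ something $=\,\cdots/r$) should produce \eqref{eq_lap_estimate}. The bookkeeping of constants, and in particular making the clean form $(\Gb n + r)$ rather than a messier expression, is where one must be careful: the trick is to use $C_D\le 1/(1-\Gr)$ in one place and $C_D(\Gg/r)\le 1$ in another, trading the two available bounds on $C_D$ against the two different $n$-weights.

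For the gradient bound \eqref{eq_diff_estimate} I would not go through the equation again but instead telescope: since $a(n)\to 0$ (indeed $a(n)=O(n\Gr^n)$), we have the absolutely convergent representation
\[
    \nabla_\mathrm{D}a(n) = a(n+1)-a(n) = -\sum_{k=n}^\infty \lap_\mathrm{D}a(k),
\]
using that $\lap_\mathrm{D}a(k)=\nabla_\mathrm{D}a(k)-\nabla_\mathrm{D}a(k-1)$ (with the convention giving $\lap_\mathrm{D}a(1)=a(2)-2a(1)$) telescopes to $\nabla_\mathrm{D}a(n) = -\sum_{k\ge n}\lap_\mathrm{D}a(k)$ once $\nabla_\mathrm{D}a(m)\to0$. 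Then I plug in \eqref{eq_lap_estimate} and sum the resulting geometric-type series $\sum_{k\ge n}(\Gb k+r)\Gr^{k-1}$ and $\sum_{k\ge n}\Gr^k$; the standard identities $\sum_{k\ge n}\Gr^k=\Gr^n/(1-\Gr)$ and $\sum_{k\ge n}k\Gr^{k-1}=\frac{d}{d\Gr}\sum_{k\ge n}\Gr^k = \frac{n\Gr^{n-1}}{1-\Gr}+\frac{\Gr^n}{(1-\Gr)^2}$ give, after multiplying by the $(1-\Gr)^2$ and $1/\Gg$ prefactors, a bound of the shape $\frac{r\Gb}{\Gg}\bigl((1-\Gr)n + \text{const}\bigr)\Gr^{n-1}$; checking that the constant can be taken to be $4$ (combining the ``+$1$'' from $n\mapsto n$ versus $n\Gr^{n-1}$ summation, the ``+$r/\Gb$-type'' lower-order piece converted via \eqref{eq_rho_beta_rel}, and the $6\Gb^2/\Gg$ term whose sum contributes the remaining additive constant) is the routine endpoint.

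The main obstacle I anticipate is purely the constant-tracking in the first step: the equation expresses $\lap_\mathrm{D}a$ as a sum of a term of size $\sim\Gb n(1-\Gr)^2\Gr^{n-1}$ and a term of size $\sim(\Gb^2/\Gg)\Gr^n$, and these have genuinely different behavior — the first is the ``curvature of the geometric profile $n\Gr^n$'' and is uniformly controlled with no $\Gg$-dependence, while the second carries the $1/\Gg$ that will ultimately be harmless only because $\Gg\ge\Gg_0$. One has to be disciplined about which of the two estimates $C_D\le(1-\Gr)^{-1}$ or $C_D\Gg r^{-1}\le1$ is invoked for each summand so that the $\Gb n$-weighted part stays $\Gg$-free and matches the $+r$ correction; getting that split wrong gives a correct but uglier inequality that later fails to telescope into the clean bound \eqref{eq_diff_estimate}. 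Everything after that — the telescoping identity and the geometric sums — is mechanical.
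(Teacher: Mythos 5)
Your plan for \eqref{eq_lap_estimate} starts from the right identity, $\lap_\mathrm{D}a(n)=\mu a(n)+\frac{\Gb^2}{\Gg}(Va(n)-f(n))$ with $\mu=\Gr^{-1}(1-\Gr)^2$, but it cannot produce the stated bound from Corollary \ref{coro_agn_estimate_rough} alone. That corollary gives $\mu a(n)\le 2(1-\Gr)^2 C_D\Gb n\Gr^{n-1}$ with $C_D=(1-\Gr+\Gg/r)^{-1}$, and you would need $C_D\Gb n\le \Gb n+r$ for all $n$, i.e.\ $(C_D-1)\Gb n\le r$; when $\Gg<\Gr r$ one has $C_D>1$ and the left side grows like $n$, so the inequality fails. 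Neither $C_D\le(1-\Gr)^{-1}$ (which only yields $(1-\Gr)$, not $(1-\Gr)^2$) nor $C_D\Gg/r\le 1$ rescues this. The paper avoids the issue by proving a genuinely new pointwise bound, Lemma \ref{lemm_agn_estimate_2}: $a(n)\le 2(\Gb n+r)\Gr^n$, established by running the comparison principle (Theorem \ref{theo_comparison}) against the fresh supersolution $\widetilde g(n)=(B_1\Gb n+R)\Gr^n$. That lemma, not the corollary, is what feeds directly into $\mu a(n)\le 2(1-\Gr)^2(\Gb n+r)\Gr^{n-1}$, and it is also needed for the estimate $Va(n)+f(n)\le 6\Gr^n$ that controls the second term. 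Your proposal has no substitute for that lemma.

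The telescoping idea $\nabla_\mathrm{D}a(n-1)=-\sum_{k\ge n}\lap_\mathrm{D}a(k)$ is correct (modulo an index shift), but plugging \eqref{eq_lap_estimate} into it cannot give \eqref{eq_diff_estimate}: the first term of \eqref{eq_lap_estimate}, $2(1-\Gr)^2(\Gb n+r)\Gr^{n-1}$, carries no $1/\Gg$, so after summation you obtain something of order $\Gb\Gr^n(n(1-\Gr)+1)$ with no $\Gg$-decay, whereas the right-hand side of \eqref{eq_diff_estimate} vanishes as $\Gg\to\infty$. The absolute-value bound has already thrown away the cancellation that produces the $1/\Gg$ prefactor. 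The paper's proof takes a different route: it rewrites $Ha=f$ as the first-order recursion $a(n+1)-\Gr a(n)=\Gr^{-1}(a(n)-\Gr a(n-1))+\frac{\Gb^2}{\Gg}(Va(n)-f(n))$, solves it explicitly (and the mirror recursion with $\Gr\leftrightarrow\Gr^{-1}$), and then uses $a\in l^\infty$ to kill the exponentially growing term, fixing $a(1)=-\frac{\Gb^2}{\Gg}\sum_{k\ge1}\Gr^k(Va(k)-f(k))$. This leaves a closed formula for $a(n)$, and hence for $\nabla_\mathrm{D}a(n)$, with the overall prefactor $r\Gb/\Gg$ built in from the start; the estimate then follows from $|Va(k)-f(k)|\le Va(k)+f(k)\le 6\Gr^k$ and geometric sums. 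If you wanted to keep a telescoping argument you would have to telescope the exact identity $\lap_\mathrm{D}a=\mu a+\frac{\Gb^2}{\Gg}(Va-f)$ rather than the bound \eqref{eq_lap_estimate}, and bound $\mu\sum_k a(k)$ using $C_D\le r/\Gg$ from Corollary \ref{coro_agn_estimate_rough} — but as noted above, that corollary is a different input than the one you used for \eqref{eq_lap_estimate}, and the route is quite different from ``plug in \eqref{eq_lap_estimate}''.
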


The above theorem is proved by using the following lemma.

\begin{lemm}
    \label{lemm_agn_estimate_2}
    It holds that
    \begin{equation}
        \label{eq_agn_estimate_rough_2}
        0\leq a (n)\leq 2(\beta n+r)\Gr^n
    \end{equation}
    for any $r, \Ge, \gamma>0$ and $n\in \Nbb$.
\end{lemm}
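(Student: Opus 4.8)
The lower bound $a(n)\ge0$ is already contained in Corollary~\ref{coro_agn_estimate_rough}, so only the upper estimate in \eqref{eq_agn_estimate_rough_2} needs to be established, and the plan is to obtain it from the discrete comparison principle, Theorem~\ref{theo_comparison}. It suffices to produce a bounded supersolution, that is, a sequence $\xi\in l^\infty(\Nbb,\Rbb)$ with $H\xi\ge Ha=f$ on all of $\Nbb$: then $H(\xi-a)\ge0$, and since $\xi-a$ is bounded (both $a$, by Corollary~\ref{coro_agn_estimate_rough}, and the $\xi$ chosen below are $O(n\Gr^n)$ and hence tend to $0$), Theorem~\ref{theo_comparison} gives $a\le\xi$. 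The obvious candidate is the right-hand side of \eqref{eq_agn_estimate_rough_2} itself, $\xi(n):=2(\Gb n+r)\Gr^n=2\Gb g(n)+2r\psi(n)$, where $g(n)=n\Gr^n$ is the comparison sequence used in the proof of Theorem~\ref{theo_agn_estimate} and $\psi(n):=\Gr^n$. (Note that the cruder Corollary~\ref{coro_agn_estimate_rough} is not enough here, since its constant $C_D$ blows up as $\Gg\to0^+$, whereas \eqref{eq_agn_estimate_rough_2} is uniform in $\Gg$.)

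The first step is to compute $H\xi$ termwise. For $2\Gb g$ I would recycle the identity \eqref{eq_comparison_app} together with $Ha=f$, giving $H(2\Gb g)(n)=\frac{2\Gr^n}{1+\Gr^{2n}}\bigl(1+\Gg/r-(1-\Gg/r)\Gr^{2n}\bigr)$. For $2r\psi$ I would compute $H\psi$ by hand: a direct calculation gives $(-\lap_\mathrm{D}\psi)(n)=-(1-\Gr)^2\Gr^{n-1}$ for $n\ge2$, while $\mu=\Gr+\Gr^{-1}-2=\Gr^{-1}(1-\Gr)^2$ so that $\mu\psi(n)=(1-\Gr)^2\Gr^{n-1}$ for $n\ge2$; these cancel, so $(-\lap_\mathrm{D}+\mu I)\psi$ vanishes for $n\ge2$ and equals $1$ at $n=1$. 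Adding the multiplier $V$ yields $H\psi(n)=\frac{\Gg}{\Gb^2}\delta_{n1}+\frac{(1-\Gr^{2n})\Gr^n}{\Gb n(1+\Gr^{2n})}$.

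Subtracting $f(n)=\frac{2\Gr^n}{1+\Gr^{2n}}$ and collecting terms collapses everything to
\[
H\xi(n)-f(n)=\frac{2\Gr^n}{1+\Gr^{2n}}\left(\frac{\Gg}{r}\,(1+\Gr^{2n})-\Gr^{2n}+\frac{r(1-\Gr^{2n})}{\Gb n}\right)+\frac{2r\Gg}{\Gb^2}\,\delta_{n1}.
\]
Since $\Gg>0$, the term $\frac{\Gg}{r}(1+\Gr^{2n})$ and the Kronecker term are non-negative, so positivity of $H\xi-f$ on $\Nbb$ reduces to the single $\Gg$-free inequality $r(1-\Gr^{2n})\ge\Gb n\Gr^{2n}$. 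That inequality follows from the geometric-sum bound $1-\Gr^{2n}=(1-\Gr^2)\sum_{k=0}^{n-1}\Gr^{2k}\ge n(1-\Gr^2)\Gr^{2(n-1)}$ combined with $r(1-\Gr^2)=\Gr\Gb$ (a rearrangement of \eqref{eq_rho_beta_rel}), which gives $r(1-\Gr^{2n})\ge n\Gb\Gr^{2n-1}\ge n\Gb\Gr^{2n}$. Hence $H(\xi-a)\ge0$, and Theorem~\ref{theo_comparison} yields $a(n)\le2(\Gb n+r)\Gr^n$.

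I do not expect any serious obstacle here: the content is essentially the bookkeeping in the second and third paragraphs, namely pinning down $\mu=\Gr^{-1}(1-\Gr)^2$ precisely so that $(-\lap_\mathrm{D}+\mu I)\Gr^n$ vanishes for $n\ge2$, and then checking that the single leftover negative term $-\Gr^{2n}$ in the bracket is absorbed by $\frac{r(1-\Gr^{2n})}{\Gb n}$ --- which, once the $\Gg$-term is discarded, is an inequality not involving $\Gg$ at all, hence automatically uniform. As a sanity check, when $\Gg\to0^+$ the operator $H$ degenerates to the multiplier $V$ and \eqref{eq_imperfect_annulus_recursive_3} gives the closed form $a(n)=\frac{2\Gb n\Gr^n}{1-\Gr^{2n}}$, which is exactly the borderline case of \eqref{eq_agn_estimate_rough_2}.
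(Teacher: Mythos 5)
Your proof is correct and rests on the same mechanism as the paper's: apply the comparison principle (Theorem~\ref{theo_comparison}) to $a-\xi$ for a supersolution $\xi$ of the form $(c_1 n+c_0)\Gr^n$, using boundedness from Corollary~\ref{coro_agn_estimate_rough} to justify the hypothesis. The paper instead takes a sharper, $\Gg$-dependent supersolution $\widetilde{g}(n)=(B_1\Gb n+R)\Gr^n$ with $R=2r\Gr(1-\Gg/r)(1+\Gg/r)^{-1}$, chosen so that $H(a-\widetilde{g})\le 0$; but this choice only makes sense when $0<\Gg<r$, so the paper splits into cases and invokes Theorem~\ref{theo_agn_estimate} directly for $\Gg\ge r$. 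You avoid the case split altogether by taking $\xi(n)=2(\Gb n+r)\Gr^n$, the right-hand side of \eqref{eq_agn_estimate_rough_2} itself, and reducing positivity of $H\xi-f$ (after discarding the manifestly nonnegative $\Gg$-dependent terms, including the Kronecker-delta contribution from the Dirichlet boundary) to the single $\Gg$-free inequality $r(1-\Gr^{2n})\ge \Gb n\Gr^{2n}$, which follows from the geometric-sum bound and $r(1-\Gr^2)=\Gb\Gr$. Your computations of $H(2\Gb g)$ via \eqref{eq_comparison_app} and of $H\psi$ for $\psi(n)=\Gr^n$ check out, and the final display for $H\xi-f$ is correct. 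The net effect is a somewhat cleaner argument that proves exactly the lemma as stated, at the modest cost of not recording the slightly tighter $(B_1\Gb n+R)\Gr^n$ bound that the paper establishes for small $\Gg$ as an intermediate step (and never uses elsewhere).
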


\begin{proof}
    It suffices to prove the inequality
    \begin{equation}
        \label{case_agn_estimate_small_g_2}
        a (n)\leq (B_1 \beta n+R)\Gr^n
    \end{equation}
    for all $r, \Ge, \gamma>0$ with $0<\gamma<r$ and $n\in \Nbb$ where $B_1$ is defined by \eqref{eq_coeffs} and
    \[
        R:=2r\Gr \left(1-\frac{\gamma}{r}\right) \left(1+\frac{\gamma}{r}\right)^{-1}
    \]
    by virtue of Theorem \ref{theo_agn_estimate}, $B_1\leq 2$ and $R\leq 2r$.

    We define the sequence $\widetilde{g} (n):=(B_1 \beta n+R)\Gr^n$. Then we have  from \eqref{eq_imperfect_annulus_recursive_3} and the formula \eqref{eq_rho_beta_rel}
    \begin{equation}\label{eq_comparison_app_2}
        H(a-\widetilde{g})(n)
        =\frac{\Gr^{3n}}{1+\Gr^{2n}} \left(B_1\left(1-\frac{\gamma}{r}\right)-\frac{R (\Gr^{-2n}-1)}{\beta n}\right)-\frac{\gamma R}{\beta^2}\delta_{n1}
    \end{equation}
    for all $n\in \Nbb$, where $\delta_{mn}$
    is the Kronecker delta. Since $x\mapsto (\Gr^{-2x}-1)/x$ is a monotonically increasing function on $(0, \infty)$, we have
    \begin{align*}
        H(a-\widetilde{g})(n)
        \leq \frac{\Gr^{3n}}{1+\Gr^{2n}} \left(B_1\left(1-\frac{\gamma}{r}\right)-\frac{R (\Gr^{-2}-1)}{\beta}\right)= 0
    \end{align*}
    for all $n\in \Nbb$ from \eqref{eq_rho_beta_rel}, \eqref{eq_comparison_app_2} and $0<\Gr<1$. Hence we obtain $a\leq \widetilde{g}$ from Theorem \ref{theo_comparison}.
\end{proof}

\begin{proof}[Proof of Theorem \ref{theo_diff_estimate}]
    We begin with noting the estimate
    \begin{equation}
        \label{eq_Vaf}
        Va (n)+f(n)\leq 6\Gr^n
    \end{equation}
    for all $n\in \Nbb$. In fact, by Lemma \ref{lemm_agn_estimate_2} and \eqref{eq_rho_beta_rel}, we have
    \begin{align*}
        Va (n)+f(n)&\leq \frac{2(1-\Gr^{2n})}{\beta n (1+\Gr^{2n})}(\beta n +r)\Gr^n+2\Gr^n
        \leq 4\Gr^n+\frac{2r(1-\Gr^{2n})}{\beta n}\Gr^n \\
        &=4\Gr^n+\frac{2r(1-\Gr^2)(1+\Gr^2+\Gr^4+\cdots + \Gr^{2n-2})}{\beta n}\Gr^n \\
        &\leq 4\Gr^n+\frac{2r(1-\Gr^2)}{\beta}\Gr^n=4\Gr^n+2\Gr^{n+1}\leq 6\Gr^n.
    \end{align*}

    We apply Corollary \ref{coro_agn_estimate_rough} and Lemma \ref{lemm_agn_estimate_2} to \eqref{eq_imperfect_annulus_recursive_3} and obtain
    \begin{align*}
        |\lap_\mathrm{D}a (n)|
        &\leq \mu a (n)+\frac{\beta^2}{\gamma}(Va (n)+f(n)) \\
        &\leq 2(1-\Gr)^2 (\beta n+r)\Gr^{n-1}+\frac{6\beta^2 \Gr^n}{\gamma},
    \end{align*}
which is the desired estimate \eqref{eq_lap_estimate}.

We now prove \eqref{eq_diff_estimate}.
    In what follows, we set $a (0):=0$. We rewrite \eqref{eq_imperfect_annulus_recursive_3} as follows:
    \[
        a (n+1)-\Gr a (n)
        =\Gr^{-1}(a (n)-\Gr a (n-1))+\frac{\Gb^2}{\Gg}(Va (n)-f(n)), \quad n\in \Nbb.
    \]
    We then have
    \[
        a (n)-\Gr a (n-1)=\Gr^{-n+1}a (1)+\frac{\Gb^2}{\Gg}\sum_{k=1}^{n-1}\Gr^{-n+k+1}(Va (k)-f(k)), \quad n\geq 2.
    \]
    Similarly we have
    \[
        a (n)-\Gr^{-1} a (n-1)=\Gr^{n-1}a (1)+\frac{\Gb^2}{\Gg}\sum_{k=1}^{n-1}\Gr^{n-k-1}(Va (k)-f(k)), \quad n\geq 2.
    \]
    Combining above two identities together, we obtain the following relation:
    \begin{equation}
        \label{eq_a_general}
        \begin{aligned}
            a (n)=&\,\frac{\Gr^{-n}}{\Gr^{-1}-\Gr}\left(a (1)+\frac{\Gb^2}{\Gg}\sum_{k=1}^{n-1}\Gr^k (Va (k)-f(k))\right) \\
            &-\frac{\Gr^n}{\Gr^{-1}-\Gr}\left(a (1)+\frac{\Gb^2}{\Gg}\sum_{k=1}^{n-1}\Gr^{-k} (Va (k)-f(k))\right)
        \end{aligned}
    \end{equation}
    for $n\geq 2$.

    Since $a \in l^\infty (\Nbb, \Rbb)$, it must hold that
    \[
        a (1)+\frac{\Gb^2}{\Gg}\sum_{k=1}^\infty \Gr^k (Va (k)-f(k))=0.
    \]
    Thanks to the relation \eqref{eq_rho_beta_rel} between $\Gr$ and $\Gb$, \eqref{eq_a_general} becomes
    \begin{equation}
        \begin{aligned}
            a (n)=&\,-\frac{r\Gb\Gr^{-n}}{\Gg}\sum_{k=n}^\infty \Gr^k (Va (k)-f(k)) \\
            &+\frac{r\Gb\Gr^n}{\Gg}\left(\sum_{k=1}^\infty \Gr^k (Va (k)-f(k))-\sum_{k=1}^{n-1}\Gr^{-k} (Va (k)-f(k))\right).
        \end{aligned}
    \end{equation}
 Employing the formula
    \[
        \nabla_\mathrm{D}(\xi \eta)(n)=(\nabla_\mathrm{D} \xi)(n)\eta (n)+\xi (n+1)(\nabla_\mathrm{D} \eta)(n)
    \]
    for $\xi, \eta: \Nbb\to \Cbb$, we then have
    \begin{align*}
        &\nabla_\mathrm{D} a (n) \\
        &=-\frac{r\Gb\Gr^{-n-1}}{\Gg}\left((1-\Gr)\sum_{k=n}^\infty \Gr^k (Va (k)-f(k))+\Gr^n (Va (n)-f(n))\right) \\
        &\quad+\frac{r\Gb\Gr^n}{\Gg}\biggl((\Gr-1)\sum_{k=1}^\infty \Gr^k (Va (k)-f(k)) \\
        &\quad-(\Gr-1)\sum_{k=1}^{n-1}\Gr^{-k} (Va (k)-f(k))-\Gr^{-n+1}(Va (n)-f(n))\biggr) \\
        &=-\frac{r\Gb}{\Gg}(1-\Gr)\biggl(\Gr^{-n-1}\sum_{k=n}^\infty \Gr^k (Va (k)-f(k))+\Gr^n\sum_{k=1}^\infty \Gr^k (Va (k)-f(k)) \\
        &\quad+\Gr^n\sum_{k=1}^{n-1}\Gr^{-k} (Va (k)-f(k))\biggr)
        -\frac{r\Gb}{\Gg}(\Gr^{-1}+\Gr) (Va (n)-f(n))
    \end{align*}
    for $n\geq 2$.

    Terms appearing after the last equality sign are estimated as follows:
    \begin{align*}
        \left|\sum_{k=n}^\infty \Gr^k (Va (k)-f(k))\right|
        \leq \sum_{k=n}^\infty \Gr^k (Va (k)+f(k))
        \leq 6\sum_{k=n}^\infty \Gr^{2k}
        =\frac{6\Gr^{2n}}{1-\Gr^2}
    \end{align*}
    for $n\in \Nbb$ and
    \begin{align*}
        \left|\sum_{k=1}^{n-1}\Gr^{-k} (Va (k)-f(k))\right|
        \leq \sum_{k=1}^{n-1}\Gr^{-k} (Va (k)+f(k))
        \leq 6\sum_{k=1}^{n-1}1=6(n-1)
    \end{align*}
    for $n\geq 2$. Combining these estimates with \eqref{eq_Vaf}, we have
    \begin{align*}
        |\nabla_\mathrm{D} a (n)|
        \leq &\,\frac{r\beta}{\gamma}(1-\Gr)\left(\Gr^{-n-1}\frac{6\Gr^{2n}}{1-\Gr^2}+\Gr^n\frac{6\Gr^2}{1-\Gr^2}
        +6\Gr^n (n-1)\right) \\
        &+\frac{6r\beta}{\gamma}(\Gr^{-1}+\Gr) \Gr^n \\
        \leq &\, \frac{6r\beta}{\gamma}((1-\Gr)(n-1)+4)\Gr^{n-1},
    \end{align*}
    which is \eqref{eq_diff_estimate}.
\end{proof}

We now estimate $S_2$, $S_3$ and $S_4$.

\begin{lemm}\label{lemm_S2}
    It holds that
    \begin{equation}
        \label{eq_S2}
        |S_2|\leq 3\beta+\frac{18\beta (r+\Ge)}{\gamma}
    \end{equation}
    for all $r, \Ge, \gamma>0$ and $\zeta\in A$.
\end{lemm}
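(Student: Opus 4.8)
The plan is to estimate $S_2$ term by term, inserting the bound on $\lap_\mathrm{D}a(n)$ from Theorem~\ref{theo_diff_estimate} and then summing elementary geometric and arithmetic--geometric series.

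First I would observe that every $\Gz\in A$ satisfies $\Gr\le|\Gz|\le\Gr^{-1}$, so each monomial in the definition of $S_2$ in \eqref{eq_dn_decompose} obeys
\[
    \left|\frac{\Gr^n}{1+\Gr^{2n}}\,\Gz^n\right|\le\frac{\Gr^n\Gr^{-n}}{1+\Gr^{2n}}=\frac{1}{1+\Gr^{2n}}\le1 .
\]
Together with \eqref{eq_lap_estimate} this gives, for all $r,\Ge,\Gg>0$ and $\Gz\in A$,
\[
    |S_2|\le\sum_{n=2}^\infty|\lap_\mathrm{D}a(n)|
    \le\sum_{n=2}^\infty\left(2(1-\Gr)^2(\Gb n+r)\Gr^{n-1}+\frac{6\Gb^2}{\Gg}\Gr^n\right),
\]
and, since $0<\Gr<1$, all three series converge; note that the decay of $|\lap_\mathrm{D}a(n)|$ alone already makes the sum finite, so discarding the factor $(1+\Gr^{2n})^{-1}$ is harmless.

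Next I would compute the sums in closed form using $\sum_{n\ge2}n\Gr^{n-1}=(1-\Gr)^{-2}-1$, $\sum_{n\ge2}\Gr^{n-1}=\Gr(1-\Gr)^{-1}$ and $\sum_{n\ge2}\Gr^n=\Gr^2(1-\Gr)^{-1}$, and then simplify with the relation \eqref{eq_rho_beta_rel}, which I would use in the form $r(1-\Gr)(1+\Gr)=r(1-\Gr^2)=\Gb\Gr$. Separating the three contributions,
\[
    2(1-\Gr)^2\Gb\sum_{n\ge2}n\Gr^{n-1}=2\Gb\Gr(2-\Gr)\le2\Gb,\qquad
    2(1-\Gr)^2 r\sum_{n\ge2}\Gr^{n-1}=\frac{2\Gb\Gr^2}{1+\Gr}\le\Gb,
\]
\[
    \frac{6\Gb^2}{\Gg}\sum_{n\ge2}\Gr^n=\frac{6\Gb^2}{\Gg}\cdot\frac{\Gr^2}{1-\Gr}=\frac{6\Gb r\,\Gr(1+\Gr)}{\Gg}\le\frac{12\Gb r}{\Gg}\le\frac{18\Gb(r+\Ge)}{\Gg},
\]
where I used $\Gr(2-\Gr)\le1$, $\Gr(1+\Gr)\le2$ and $0<\Gr<1$ throughout. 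Adding the three bounds yields $|S_2|\le3\Gb+\dfrac{18\Gb(r+\Ge)}{\Gg}$.

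The argument is essentially mechanical; the one step needing attention is the use of \eqref{eq_rho_beta_rel} to trade the $r$- and $\Gr$-dependence for $\Gb$, so that the estimate comes out in the homogeneous form in $\Gb$ that is required when these pieces are assembled into the gradient bound. The numerical constants are not sharp, and a coarser bookkeeping would still produce a bound of the shape $C\Gb\bigl(1+(r+\Ge)/\Gg\bigr)$.
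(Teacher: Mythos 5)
Your proposal is correct and follows essentially the same route as the paper: bound the weight $|\Gr^n\Gz^n/(1+\Gr^{2n})|\le 1$ on $A$, substitute \eqref{eq_lap_estimate}, sum the resulting geometric and arithmetic--geometric series, and trade $r,\Gr$ for $\Gb$ via \eqref{eq_rho_beta_rel}. The only cosmetic difference is in handling the last term, where the paper invokes the explicit inequality $\Gb/(1-\Gr)\le 3r+\Ge$ (its \eqref{eq_rho_beta_est}) while you use $\Gr(1+\Gr)\le 2$; both give the stated bound.
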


\begin{proof}
    From \eqref{eq_lap_estimate} in Theorem \ref{theo_diff_estimate} and
    \begin{equation}\label{eq_rho_beta_est}
        2r\leq \frac{\Gb}{1-\Gr}=\frac{4r+\Ge+\sqrt{\Ge (4r+\Ge)}}{2}\leq 3r+\Ge,
    \end{equation}
    we obtain
    \begin{align*}
        \left|\sum_{n=2}^\infty \frac{\Gr^n}{1+\Gr^{2n}}\lap_\mathrm{D}a (n)\Gz^n\right|
        &\leq 2(1-\Gr)^2 \sum_{n=2}^\infty (\beta n+r)\Gr^{n-1}+\frac{6\beta^2}{\gamma}\sum_{n=2}^\infty \Gr^n \\
        &=2\beta \Gr (2-\Gr)+2r\Gr (1-\Gr)+\frac{6\beta^2}{\gamma (1-\Gr)} \\
        &\leq 3\beta+\frac{18\beta (r+\Ge)}{\gamma}. \qedhere
    \end{align*}
\end{proof}

\begin{lemm}
    \label{lemm_S34}
    It holds that
    \begin{equation}
        \label{eq_S34}
        |S_3|\leq \frac{36r\beta}{\gamma}, \quad
        |S_4| \leq 6\beta C_D+\frac{36r\beta}{\gamma}
    \end{equation}
    for all $r, \Ge, \Gg>0$ and $\Gz\in A$.
\end{lemm}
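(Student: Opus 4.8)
The plan is to bound $S_3$ and $S_4$ from \eqref{eq_dn_decompose} termwise, in the same spirit as the estimates of $S_1$ and $S_2$, using three ingredients: the explicit form of the forward difference of the weight $n\mapsto \Gr^n/(1+\Gr^{2n})$, the bound \eqref{eq_diff_estimate} for $\nabla_\mathrm{D}a$, and the trivial bound $|\Gz|^n\leq \Gr^{-n}$ valid on $A$. First I would compute
\[
    \left(\nabla_\mathrm{D}\frac{\Gr^n}{1+\Gr^{2n}}\right)(n)=-\frac{(1-\Gr)(1-\Gr^{2n+1})\Gr^n}{(1+\Gr^{2n+2})(1+\Gr^{2n})},
\]
which gives at once $\bigl|(\nabla_\mathrm{D}\frac{\Gr^n}{1+\Gr^{2n}})(n)\bigr|\leq (1-\Gr)\Gr^n$ for every $n\in\Nbb$, and likewise $\bigl|(\nabla_\mathrm{D}\frac{\Gr^n}{1+\Gr^{2n}})(n-1)\bigr|\leq (1-\Gr)\Gr^{n-1}$.

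For $S_3$, inserting this estimate, \eqref{eq_diff_estimate}, and $|\Gz|^n\leq \Gr^{-n}$ into the sum makes the powers $\Gr^{n}$ (from the weight difference) and $\Gr^{-n}$ (from $|\Gz|^n$) cancel and leaves the arithmetico-geometric series $\sum_{n\geq 2}(1-\Gr)\bigl((1-\Gr)n+4\bigr)\Gr^{n-1}=\Gr(6-\Gr)$, so that $|S_3|\leq \frac{6r\beta}{\gamma}\Gr(6-\Gr)\leq \frac{36r\beta}{\gamma}$. For $S_4$ the two difference factors are evaluated at $n-1$ while the power of $\Gz$ is still $n$, so the analogous computation leaves a summand proportional to $\Gr^{n-3}$: this is harmless for $n\geq 3$, where the series still converges with bounded sum, but at $n=2$ it would produce a spurious factor $\Gr^{-1}$. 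I would therefore split off the $n=2$ term of $S_4$ and estimate it directly, not through \eqref{eq_diff_estimate}: since $|\nabla_\mathrm{D}a(1)|\leq a(1)+a(2)\leq 6C_D\beta\Gr$ by Corollary \ref{coro_agn_estimate_rough}, together with $\bigl|(\nabla_\mathrm{D}\frac{\Gr^n}{1+\Gr^{2n}})(1)\bigr|\leq (1-\Gr)\Gr$ and $|\Gz|^2\leq \Gr^{-2}$, that term is at most $6\beta C_D(1-\Gr)\leq 6\beta C_D$. For $n\geq 3$ one has $n-1\geq 2$, so \eqref{eq_diff_estimate} supplies the decaying weight $\Gr^{n-2}$, and summing $(1-\Gr)\bigl((1-\Gr)(n-1)+4\bigr)\Gr^{n-3}$ over $n\geq 3$ gives $6-\Gr$; hence the tail is at most $\frac{6r\beta}{\gamma}(6-\Gr)\leq \frac{36r\beta}{\gamma}$. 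Adding the two contributions yields $|S_4|\leq 6\beta C_D+\frac{36r\beta}{\gamma}$.

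The only step that needs care --- and precisely the reason $S_4$ carries the extra term $6\beta C_D$ absent from $S_3$ --- is the $n=2$ term of $S_4$: the bound \eqref{eq_diff_estimate} at $n=1$ is far too crude there because it ignores the cancellation in $a(2)-a(1)$, so one must fall back on the pointwise estimate for $a(1)$ and $a(2)$ from Corollary \ref{coro_agn_estimate_rough}. Everything else is routine summation of geometric and arithmetico-geometric series of ratio $\Gr\in(0,1)$, exactly as in Lemmas \ref{lemm_S1} and \ref{lemm_S2}.
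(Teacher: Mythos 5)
Your proof is correct and essentially coincides with the paper's argument. Both start from the explicit formula for $\nabla_{\mathrm{D}}\frac{\Gr^n}{1+\Gr^{2n}}$ giving the bound $(1-\Gr)\Gr^n$, both use $|\Gz|^n\leq\Gr^{-n}$ on $A$, both invoke \eqref{eq_diff_estimate} for the tail, and both isolate the lone problematic term of $S_4$ involving $\nabla_{\mathrm{D}}a(1)$ and bound it directly by $a(1)+a(2)\leq 2C_D\Gb\Gr(1+2\Gr)$ via Corollary~\ref{coro_agn_estimate_rough}. (The paper writes the split by first factoring out $\Gr^{-1}$ and re-indexing, which yields the same $n=2$ term you peel off; it keeps the slightly tighter intermediate bound $2C_D\Gb(1-\Gr)(1+2\Gr)$ where you round to $6C_D\Gb(1-\Gr)$, but both end up at $6C_D\Gb$.) One small imprecision in your commentary: the reason \eqref{eq_diff_estimate} cannot be used at $n=1$ is not really ``cancellation in $a(2)-a(1)$'' --- the derivation of \eqref{eq_diff_estimate} simply only covers $n\geq 2$, and its weight $\Gr^{n-1}$ would give $\Gr^0=1$ at $n=1$, overshooting the true size $O(\Gr)$ of $\nabla_{\mathrm{D}}a(1)=a(2)-a(1)$ (where $a(1)$ and $a(2)$ are each already $O(\Gr)$). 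Your strategy of falling back on the pointwise bounds for $a(1),a(2)$ is exactly what the paper does.
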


\begin{proof}
    We only derive the estimate for $|S_4|$. That for $|S_3|$ is similarly derived.

    We have
    \[
        \left(\nabla_\mathrm{D} \frac{\Gr^n}{1+\Gr^{2n}}\right)(n)=-\frac{(1-\Gr)\Gr^n (1-\Gr^{2n+1})}{(1+\Gr^{2n+2})(1+\Gr^{2n})}
    \]
    for $n\geq 1$ and thus
    \begin{equation}
        \label{eq_grad_rho}
        \left|\left(\nabla_\mathrm{D} \frac{\Gr^n}{1+\Gr^{2n}}\right)(n)\right|
        \leq (1-\Gr)\Gr^n
    \end{equation}
    for $n\geq 1$.

    From Corollary \ref{coro_agn_estimate_rough}, \eqref{eq_diff_estimate} and \eqref{eq_grad_rho}, we obtain
    \begin{align*}
        |S_4|
        &\leq (1-\Gr)\sum_{n=2}^\infty \Gr^{n-1}|\nabla_\mathrm{D} a (n-1)|\Gr^{-n} \\
        &= (1-\Gr)\Gr^{-1}|\nabla_\mathrm{D} a (1)|+(1-\Gr)\Gr^{-1}\sum_{n=2}^\infty |\nabla_\mathrm{D} a (n)| \\
        &\leq 2C \beta (1-\Gr)(1+2\Gr)+\frac{6r\beta}{\gamma}(1-\Gr)\sum_{n=2}^\infty ((1-\Gr)n+4)\Gr^{n-2} \\
        &=2C \beta (1-\Gr)(1+2\Gr)+\frac{6r\beta}{\gamma}(6-\Gr)
        \leq 6C \beta +\frac{36r\beta}{\gamma}. \qedhere
    \end{align*}
\end{proof}

Putting all the estimates above together, we have the following theorem.

\begin{theo}
    \label{theo_existence_two_disks}
    Let $U$ be the function constructed in this section and let $u = U \circ T$. It holds that
    \begin{equation}
        \label{eq_gradient_estimate}
        \|\nabla u\|_{L^\infty (\Rbb^2\setminus \overline{D})}\leq 180\left(1+\frac{1+|1-\gamma/r|}{1-\Gr+\gamma/r}+\frac{r+\Ge}{\gamma}\right)
    \end{equation}
    for all $r, \Ge, \Gg>0$.
\end{theo}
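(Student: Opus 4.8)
The plan is to collect the estimates already in hand and carry out the arithmetic. By the chain-rule bound \eqref{eq_grad_u} it suffices to control $\|(\Gz-1)^2\p_\Gz F(\Gz)\|_{L^\infty(A)}$, then divide by $\Gb/2$ and add $1$. The Abel-summation identity \eqref{eq_grad_abel} splits $(\Gz-1)^2\p_\Gz F(\Gz)$ into the boundary contribution $-c(1)(\Gz-1)+(2c(2)-c(1))\Gz$, bounded in Lemma~\ref{lemm_estimate_beginning}, and the series $\sum_{n\geq 2}(\lap_\mathrm{D}d)(n)\Gz^n$, which \eqref{eq_dn_decompose} rewrites as $\Gb S_0-S_1-S_2-S_3-S_4$. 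So the first step is simply to apply Lemmas~\ref{lemm_S0}, \ref{lemm_S1}, \ref{lemm_S2} and \ref{lemm_S34}, and to add the six resulting bounds by the triangle inequality; this gives, for every $\Gz\in A$,
\[
|(\Gz-1)^2\p_\Gz F(\Gz)|
\leq \Gb\Bigl(11+\bigl(10|1-\Gg/r|+12\bigr)C_D+\frac{18(r+\Ge)+72r}{\Gg}\Bigr),
\]
where crucially every term carries a factor $\Gb$.

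The second step is bookkeeping. Dividing the last display by $\Gb/2$ and adding $1$ as in \eqref{eq_grad_u} produces
\[
\|\nabla u\|_{L^\infty(\Rbb^2\setminus\overline{D})}
\leq 23+2\bigl(10|1-\Gg/r|+12\bigr)C_D+\frac{36(r+\Ge)+144r}{\Gg}.
\]
To reach the stated form I would use $r\leq r+\Ge$ to bound the last fraction by $180(r+\Ge)/\Gg$; recall from \eqref{eq_coeff_big} that $C_D=(1-\Gr+\Gg/r)^{-1}$, so that $2(10|1-\Gg/r|+12)C_D\leq 24(1+|1-\Gg/r|)(1-\Gr+\Gg/r)^{-1}$ since $20\leq 24$; and finally bound each of the numerical coefficients $23$, $24$, $180$ by $180$. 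This is exactly \eqref{eq_gradient_estimate}.

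There is no genuine obstacle left at this point: the analytic content---the comparison-principle estimates for $a(n)$ in Theorems~\ref{theo_comparison} and \ref{theo_agn_estimate}, the estimates on $\nabla_\mathrm{D}a$ and $\lap_\mathrm{D}a$ in Theorem~\ref{theo_diff_estimate}, and the term-by-term bounds on the $S_i$---has already been done. The only care required is to verify that each contribution is genuinely dominated by one of the three model quantities $1$, $(1+|1-\Gg/r|)C_D$, and $(r+\Ge)/\Gg$, and that the factors of $\Gb$ cancel exactly against the $2/\Gb$ in \eqref{eq_grad_u}; the deliberately loose constant $180$ is chosen precisely so that this last rounding step goes through without having to track the arithmetic sharply.
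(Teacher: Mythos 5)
Your proposal is correct and follows the same route as the paper: chain-rule bound \eqref{eq_grad_u}, Abel-summation decomposition \eqref{eq_grad_abel} together with \eqref{eq_dn_decompose}, and triangle inequality over Lemmas~\ref{lemm_estimate_beginning}, \ref{lemm_S0}, \ref{lemm_S1}, \ref{lemm_S2}, \ref{lemm_S34}; the only (immaterial) difference is when you round—the paper passes through $12\Gb\bigl(1+C_D(1+|1-\Gg/r|)\bigr)+90\Gb(r+\Ge)/\Gg$ before multiplying by $2/\Gb$, while you multiply first and then round the coefficients $23$, $24$, $180$ up to $180$—and both lead to \eqref{eq_gradient_estimate}.
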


\begin{proof}
By \eqref{eq_grad_abel}, Lemma \ref{lemm_estimate_beginning}, Lemma \ref{lemm_S0} and Lemma \ref{lemm_S1}, we obtain
    \begin{equation}\label{eq_grad_wip}
        |(\Gz-1)^2\p_\Gz F (\Gz)|
        \leq 10\Gb C_D \left|1-\frac{\Gg}{r}\right|+8\Gb+6\Gb C_D+|S_2|+|S_3|+|S_4|.
    \end{equation}
    By Lemma \ref{lemm_S2} and Lemma \ref{lemm_S34}, we have
    \begin{align}
        |S_2|+|S_3|+|S_4|
        &\leq \left(3\beta+\frac{18\beta (r+\Ge)}{\gamma}\right)+\frac{36\beta r}{\gamma}
        +\left(6\beta C_D+\frac{36\beta r}{\gamma}\right) \nonumber \\
        &=3\beta+6\beta C_D+\frac{18\beta (5r+\Ge)}{\gamma}.
        \label{eq_S234_2}
    \end{align}
    Combining \eqref{eq_grad_wip} and \eqref{eq_S234_2}, we obtain
    \begin{align*}
        |(\Gz-1)^2 \p_\Gz F (\Gz)|
        \leq &\, \left(10\beta C_D \left|1-\frac{\gamma}{r}\right|+8\beta+6\beta C_D\right) \\
        &+\left(3\beta+6\beta C_D+\frac{18\beta (5r+\Ge)}{\gamma}\right) \\
        \leq &\, 12\beta \left(1+C_D \left(1+\left|1-\frac{\gamma}{r}\right|\right)\right)
        +\frac{90\beta (r+\Ge)}{\gamma} \\
        =&\, 12\beta + \frac{12\beta (1+|1-\gamma/r|)}{1-\Gr+\gamma/r}+\frac{90\beta (r+\Ge)}{\gamma}.
    \end{align*}
    We combine this estimate with \eqref{eq_grad_u} to obtain the conclusion.
\end{proof}

Theorem \ref{mainLC} now can be proved easily.
\begin{proof}[Proof of Theorem \ref{mainLC}]
That the solution $u$ belongs to the space $h+H^1_\mathrm{loc}(\Rbb^2\setminus D)$ is clear since $u$ is smooth up to $\p D$. Uniqueness of the solution is already proved.

If $\Gg \ge \Gg_0$ for some $\Gg_0>0$, then the quantity on the right hand side of \eqref{eq_gradient_estimate} are bounded regardless of the distance between inclusions. Thus \eqref{LC-C_gradient_estimate} follows.
\end{proof}

\section{Proof of Theorem \ref{mainHC}}\label{sect_HC}

We just show that two problems \eqref{LCC} and \eqref{HCI} in two dimensions are dual problems in the sense that the solution of the one problem is a harmonic conjugate of the solution to the other problem.

Let $u$ be the solution to \eqref{LCC} for a given entire harmonic function $h$. Let $\widetilde{h}$ be a harmonic conjugate of $h$ in $\Rbb^2$. Thanks to the third condition in \eqref{LCC}, $u$ has the harmonic conjugate $v$ in $\Rbb^2\setminus \overline{D}$ such that $v(x) - \widetilde{h}(x) = O(|x|^{-1})$. By taking tangential derivative $\p_\Gj$ of the both sides of the second line in \eqref{LCC}, we have from the Cauchy-Riemann equations that
\begin{align}
- \p_\Gv v|_+ = \Gg \p_\Gj^2 v .
\end{align}
Since $\p_\Gj^2 v= \GD_S v$, $v$ is the solution to \eqref{HCI} with $\Ga=\Gg$ and $h$ replaced with $\widetilde{h}$.

The converse can be seen similarly. Let $v$ be the solution to \eqref{HCI} for a given entire harmonic function $h$ and let $\widetilde{h}$ be a harmonic conjugate of $h$ in $\Rbb^2$. By the second condition of \eqref{HCI}, we have
$$
    \int_{\p D_j} \p_{\Gv} v |_+\; \df\Gs= -\Ga \int_{\p D_j} \GD_S v \; \df\Gs =0, \quad j=1,2.
$$
Thus $v$ admits a harmonic conjugate, denoted by $u$, in $\Rbb^2\setminus \overline{D}$ such that $u(x) - \widetilde{h}(x) = O(|x|^{-1})$. This $u$ is the solution to \eqref{LCC} with $h$, $\Gg$ replaced by $\widetilde{h}$, $\Ga$, respectively.

\section*{Discussion}\addcontentsline{toc}{section}{\protect\numberline{}Discussion}
In this paper we investigate stress in a system where inclusions are bonded to the matrix by the biological bonding. As a starting point of such investigation, we consider the simplest geometry of circular inclusions. We also dealt with inclusions with imperfect bonding conditions which are approximations of the core-shell structure. We show that the stress (the gradient of the solution) stays bounded even if two inclusions are arbitrarily close as long as the imperfect bonding parameter is finite. It would be quite important and challenging to extend this result to more general case: more general geometry in two and three dimensions and to a more general bonding such as the core-shell structure of membranes.

\section*{Acknowledgement} Authors thank Tao Li for helpful discussions.

\bibliography{FJKL_finite_stress}

\begin{thebibliography}{10}

\bibitem{AKLLL}
H.~Ammari, H.~Kang, H.~Lee, J.~Lee, and M.~Lim.
\newblock Optimal estimates for the electric field in two dimensions.
\newblock {\em J. Math. Pures Appl. (9)}, 88(4):307--324, 2007.

\bibitem{AKL}
H.~Ammari, H.~Kang, and M.~Lim.
\newblock Gradient estimates for solutions to the conductivity problem.
\newblock {\em Math. Ann.}, 332(2):277--286, 2005.

\bibitem{ABR01}
S.~Axler, P.~Bourdon, and W.~Ramey.
\newblock {\em Harmonic function theory}, volume 137 of {\em Graduate Texts in
  Mathematics}.
\newblock Springer-Verlag, New York, second edition, 2001.

\bibitem{BLY}
E.~S. Bao, Y.~Y. Li, and B.~Yin.
\newblock Gradient estimates for the perfect conductivity problem.
\newblock {\em Arch. Ration. Mech. Anal.}, 193(1):195--226, 2009.

\bibitem{BM1999}
Y.~Benveniste and T.~Miloh.
\newblock Neutral inhomogeneities in conduction phenomena.
\newblock {\em J. Mech. Phys. Solids}, 47(9):1873--1892, 1999.

\bibitem{DL}
H.~Dong and H.~Li.
\newblock Optimal estimates for the conductivity problem by {G}reen's function
  method.
\newblock {\em Arch. Ration. Mech. Anal.}, 231(3):1427--1453, 2019.

\bibitem{DLY22}
H.~Dong, Y.~Y. Li, and Z.~Yang.
\newblock Gradient estimates for the insulated conductivity problem: The
  non-umbilical case.
\newblock arXiv:2203.10081 [math.AP].

\bibitem{DLY21}
H.~Dong, Y.~Y. Li, and Z.~Yang.
\newblock Optimal gradient estimates of solutions to the insulated conductivity
  problem in dimension greater than two.
\newblock to appear in \textit{J. Eur. Math. Soc.} arXiv:2110.11313 [math.AP].

\bibitem{Evans10}
L.~C. Evans.
\newblock {\em Partial differential equations}, volume~19 of {\em Graduate
  Studies in Mathematics}.
\newblock American Mathematical Society, Providence, RI, second edition, 2010.

\bibitem{Hashin62}
Z.~Hashin.
\newblock The elastic moduli of heterogeneous materials.
\newblock {\em J. Appl. Mech.}, 29:143--150, 1962.

\bibitem{Ji-Kang22}
Y.-G. Ji and H.~Kang.
\newblock Spectrum of the {N}eumann-{P}oincar\'{e} operator and optimal
  estimates for transmission problems in the presence of two circular
  inclusions.
\newblock {\em Int. Math. Res. Not. IMRN}, 2023(9):7638--7685, 2023.

\bibitem{Kang23}
H.~Kang.
\newblock Quantitative analysis of field concentration in presence of closely
  located inclusions of high contrast.
\newblock In {\em Proceedings ICM 2022}, volume~7, pages 5680--5701, 2023.

\bibitem{KL19}
H.~Kang and X.~Li.
\newblock Construction of weakly neutral inclusions of general shape by
  imperfect interfaces.
\newblock {\em SIAM J. Appl. Math.}, 79(1):396--414, 2019.

\bibitem{Keller-JAP-63}
J.~B. Keller.
\newblock Conductivity of a medium containing a dense array of perfectly
  conducting spheres or cylinders or nonconducting cylinders.
\newblock {\em J. Appl. Phys.}, 34(4):991--993, 1963.

\bibitem{LN}
Y.~Y. Li and L.~Nirenberg.
\newblock Estimates for elliptic systems from composite material.
\newblock {\em Comm. Pure Appl. Math.}, 56(7):892--925, 2003.

\bibitem{LV}
Y.~Y. Li and M.~Vogelius.
\newblock Gradient estimates for solutions to divergence form elliptic
  equations with discontinuous coefficients.
\newblock {\em Arch. Ration. Mech. Anal.}, 153(2):91--151, 2000.

\bibitem{LY-CPDE-09}
M.~Lim and K.~Yun.
\newblock Blow-up of electric fields between closely spaced spherical perfect
  conductors.
\newblock {\em Comm. Partial Differential Equations}, 34(10-12):1287--1315,
  2009.

\bibitem{Torquato-Rintoul95}
S.~Torquato and M.~Rintoul.
\newblock Effect of the interface on the properties of composite media.
\newblock {\em Phys. Rev. Lett}, 75(22):4067, 1995.

\bibitem{Yun}
K.~Yun.
\newblock Estimates for electric fields blown up between closely adjacent
  conductors with arbitrary shape.
\newblock {\em SIAM J. Appl. Math.}, 67(3):714--730, 2007.

\bibitem{Yun16}
K.~Yun.
\newblock An optimal estimate for electric fields on the shortest line segment
  between two spherical insulators in three dimensions.
\newblock {\em J. Differential Equations}, 261(1):148--188, 2016.

\end{thebibliography}
\bibliographystyle{abbrv}

\end{document}